\documentclass[12pt,reqno]{amsart}

\usepackage{epsfig}
\usepackage{varioref}
\usepackage{amsmath}
\usepackage{amsfonts}
\usepackage{amsthm}

\linespread{1}

\usepackage{amssymb}

\def\sqr#1#2{{\vcenter{\vbox{\hrule height .#2pt
     \hbox{\vrule width .#2pt height#1pt \kern#1pt \vrule
     width .#2pt} \hrule height .#2pt}}}}

\newcommand{\R}{{\mathbb{R}}}

\newcommand{\C}{{\mathbb{C}}}
\newcommand{\N}{{\mathbb{N}}}

\newcommand{\Ep}{{\mathcal{E}}}
\newcommand{\Rea}{\operatorname{Re}}

\newcommand{\cG}{{\mathcal{G}}}

\newcommand{\Ba}{{\mathcal{B}}}
\newcommand{\Hi}{{\mathcal{H}}}

\newcommand{\Fo}{{\mathcal{F}}}
\newcommand{\Fin}{{\mathcal{F}}(I)}
\newcommand{\Cil}{{\mathcal{C}}}

\newcommand{\Mi}{{\mathcal{M}}}
\newcommand{\di}{{\mathcal{D}}}

\newtheorem{theorem}{Theorem}
\newtheorem{Remark}{Remark}
\newtheorem{definition}{Definition}
\newtheorem{example}{Example}
\newtheorem{lemma}{Lemma}
\newtheorem{proposition}{Proposition}


\begin{document}


\title{A unified approach to infinite dimensional integration}

\author{S. Albeverio $^*$ and S. Mazzucchi $^{**}$ }

\address{ $^*$ Institut f\"ur Angewandte Mathematik,
Endenicher Allee 60, 53115 Bonn, HCM; BIBOS; IZKS; Cerfim (Locarno). }
\email{albeverio@iam.uni-bonn.de}

\address{$^{**}$ Dipartimento di Matematica, Universit\`a  di Trento, via Sommarive 14
I-38123 Trento, Italy.}
\email{ sonia.mazzucchi@unitn.it}


\begin{abstract}
An approach to infinite dimensional integration which unifies the case of oscillatory integrals and the case of probabilistic type integrals is presented. It provides a truly infinite dimensional construction of integrals as linear functionals, as much as possible independent of the underlying topological and measure theoretical structure. Various applications are given, including, next to Schr\"odinger and diffusion equations, also higher order hyperbolic and parabolic equations.\\

\noindent {\it Key words:} integration theory via linear functionals, measure theory on infinite dimensional spaces, probabilistic representation of solutions of PDEs, stochastic processes, Feynman path integrals.\\

\noindent {\it AMS classification }: 28A30, 28C05,  	28C20, 28A25, 35C15, 35G10, 35K30, 35L30, 35Q41, 46M10, 60B11, 60G51,  60J35.
\end{abstract}


\maketitle


\section{Introduction}

Integration has its origins in the problem of determination of volumes (besides the one of finding primitives). Since Cauchy's first systematic investigation, where continuous functions on bounded intervals are considered for integration, an increasing generality has been pursued, both concerning admissible functions and domains of integration, culminating first in the concept of Riemann integrable functions and the relative integral, the underlying measure still being essentially the ``natural'' volume element, see, e.g. , \cite{Pes,Boy}. \\
The increasing generality reached, especially after Cantor's work, in the analysis of the structure of sets and functions led to an approach, particularly associated to Lebesgue, where the underlying measure, rather than the functions to be integrated, are the basic quantities to be considered. This finally has brought about what is known nowadays as ``abstract integration theory'' based on measure spaces, which since Kolmogorov \cite{Ko,Kol} is also at the basis of probability theory.\\
 Despite the great generality of this approach, the integrals arising in certain area of mathematics and its applications do not quite fit into this framework. In particular Lebesgue integrals (and classical integrals on measure spaces) are by construction absolutely convergent integrals, whereas there are areas and problems where one would not necessarily have this strong property, see, e.g., \cite{Pes,Mul1,Muld2}. Of particular interest for the present paper are integrals of oscillatory functions, where compensations between contributions of different signs can lead to finite overall results for the integrals, whereas the corresponding absolute integrals diverge. Such  non absolutely convergent integrals are usually conceived as ``improper integrals'' and handled as limits of ``proper integrals'', whenever the limits exist.\\
These improper integrals were traditionally not studied systematically, indeed they were quite marginal within theories of integration. Yet oscillatory functions and integrals of them are of great importance in applications, especially in connection with waves (optics, electromagnetism, hydrodynamics) and oscillatory (vibrations) phenomena, as they occur in mechanical or quantum mechanical structures.
Already in the XIX-century the behavior of the oscillatory integrals of interest was analyzed both numerically and analytically in certain asymptotic regimes, which, e.g., in optics happen to be critical points or manifolds. These stationary (or saddle point) methods to handle (finite dimensional) oscillatory integrals were first developed according to problems at hand, but in the second half of last century they became special cases of a general theory under the name of theory of Fourier integral operators, see, e.g. \cite{Hor1,Hor2,DuiH,GuSt,HeRo,HewStr,Ste,Maslov}. 
The detailed ``method of stationary phase'' (in the case where the critical manifolds are degenerate) has extremely interesting connections with the theory of resolution of singularities of mappings (and catastrophe theory), see, e.g., \cite{Arnoldetal,Co,DuiH,AlBr1,AlHK,AMBull}.\\
Due to the fact that oscillatory integrals are finite but in general not absolutely convergent, looked upon as linear functionals they have their own definition domains and continuity properties, different from those of the absolutely convergent integrals with respect to measures on measurable spaces.
This is so on finite dimensional vector spaces (or, more generally, on locally compact spaces, where Lebesgue's integration theory was further developed). So even in finite dimensional integration theory we should distinguish between ``oscillatory integrals'' (typically not absolutely convergent) and ``absolutely convergent integrals''. The latter are handled by abstract Lebesgue's integration theory, where the underlying measure structure is relevant, but they can also be looked upon as continuous linear functionals (an approach stressed particularly by P.J. Daniell, see e.g. \cite{Pes}).
Before leaving the ``finite dimensional theory'' let us also mention that Henstock-Kurzweil-Denjoy-Perron integration theory, see, e.g. \cite{Mul1,Muld2,Sc}, extends Lebesgue's integration theory in a certain direction covering also non absolutely convergent integrals. A systematic theory of finite dimensional oscillatory integrals in this setting has however not been developed yet.\\
Already at the beginning of the XXth century, especially after the work of Borel, Fr\'echet and Lebesgue, interest in extending the integration theory (and in fact analysis as a whole) to infinite dimensional spaces arose. 
The major momentum in developing infinite dimensional integration and analysis came in connection with the study of strongly irregular natural phenomena.
Mathematicians like V. Volterra, N. Wiener and N. Kolmogorov were mainly motivated by the study of such phenomena and the search for appropriate mathematical methods to understand and describe them. In particular N. Wiener \cite{Wie} introduced his probability measure on paths to provide a mathematical setting for the description of the behavior of the physical Brownian motion (partly anticipated in this by Einstein, Smoluchowski in physics, and independently, on the other hand, Bachelier, who was rather concerned with problems in economics). V. Volterra stressed the importance of infinite dimensional analysis in particular in connection with certain variational principles. Kolmogorov introduced and developed measures on product spaces, by means of projective systems (products of infinitely many probability measures had already been considered before by Daniell and Steinhaus, see also e.g. \cite{ChaRam,Lamb}).
Since Wiener's work and especially after Cameron, Martin and Kakutani's discovery of the close connection of Wiener measure with potential theory and parabolic equations,  probability measures on Hilbert and Banach spaces began to be studied systematically, see, e. g., references in  \cite{Bog,Bog2,Vak,Linde}. In particular Wiener measure was recognized as being just a special case of Gaussian measures on Banach spaces, with underlying determining ``tangential'' Hilbert spaces. This brought about ``in the west'' the theory of abstract Wiener Spaces \cite{Gro1,Gro2,DuFeLeC,HY,Kal,Kuo,Ba-Che,Doo,Doo2,Partha,Roz} and ``in the east'' the theory of measures on the dual of nuclear spaces \cite{Mi,GelVi,Sko,Sa,VeSu}.
Further impetus came on one hand through the development of the theory of stochastic differential equations (where K. It\^{o} and Skorohod were the most important pioneers, even though the concepts had already roots in earlier work by Langevin and Bernstein) and stochastic analysis. On the other hand strong motivations from hydrodynamics (turbulence theory) and especially from quantum field theory urged the development of higher space-time dimensional analogues of measures on paths spaces, i.e.  measures associated with processes depending, in addition to time, on space variables. In the west pioneering work in this direction  was achieved by K.O. Friedrichs \cite{Frie}, I. Segal \cite{Se}, L. Gross \cite{Gro2} and in the east by Gelfand and his school, see, e.g., \cite{GelVi,Pro}. A further strong momentum came from work by E. Nelson, see, e.g. \cite{Ne2,ReSi}, (influenced by K Symanzik and physicists like Nagano and  Schwinger) on a Euclidean approach to relativistic quantum field theory, which permitted to construct models of low space dimensional quantum fields essentially by probabilistic and statistical mechanical methods, see, e. g. \cite{Si,GlJa,BSZ,Alb,AFHKL,AKKR1,HKPS}. Later on an important connection with SPDE's was established, see, e.g. \cite{Alb,Ne2, AlHK74,AR89,DaPra06,DaP,DaPra,Dem,Doo,Doo2,Ikeda,PeZa,HKPS,HiS,RudigerWu2001,AZe,Streit,DipersioMastrogiacomo2011}. In all these constructions and applications, probability measures on infinite dimensional spaces are constructed by starting from projective systems of finite dimensional probability measures. A Kolmogorov's type construction, see, e.g., \cite{AMaLausanne} yields then the probability measure on a suitable projective limit, on which it is indeed $\sigma$-additive.
The relevant quantities associated with the processes and fields are then expressed as integrals with respect to these probability measures (with support consisting, in general, of singular or generalized functions). The connection of such integrals with equations of quantum mechanics resp. quantum field theory or, more generally, hyperbolic problems are, rather indirect, via ``analytic continuation'', whenever possible (this is rarely the case if space times are not flat, by the way).\\
As discovered by Feynman (1942-1948), on the other hand (anticipated to some extent by Wentzell (1905) and Dirac (1933)), to express quantum mechanical quantities, like solutions of the Schr\"odinger equation or like time evolved position operators, one would need oscillator-type integrals on spaces of paths or fields. This leads to the problem of the construction of ``Feynman path integrals'' or more generally oscillatory integrals in infinite dimensions.\\
The finite dimensional oscillatory integrals should somehow be imbedded in this construction. One can then also expect to obtain, as it was for the finite dimensional oscillatory integrals, a good setting for a method of stationary phase in infinite dimensions. This program has been initiated by Yu. Daletskii \cite{Da,DaFo} and K. It\^{o} \cite{Ito}, and continued systematically in \cite{AlHKMa,AlHK,ELT,AlBr1,AB,AlBre,AlBrHa,GuatteriMazzucchi2002,GuatteriMazzucchi2003,GuatteriMazzucchi2004,HahnSengupta2004,AlJoPaSca,AlKhSmo, Mazzucchi2004, Mazzucchi2004a,AMBull,AlMa05,Mazzucchi2005b,Mazzucchi2006,Mazzuchi2007,Mazzucchi2009,Mazzucchi2009a,Mazzucchi2011,Mitoma2009,AlSe,Bor,CarNu,DeFPS,ELT,
Exn,Fuj,GroStreVo,He,Ich, Kal2, KalKanKar, Kol1,Kol2,LeuSch,Man,Ma2008,Ne,Roe,Tru,Zas}, see also \cite{Alb,AlMaScolarpedia} and references therein.\\
A connection with certain infinite dimensional distributions has also been achieved \cite{HKPS,Hida} (see also \cite{CaDeW,Ma11}).\\
The present paper aims at unifying the constructions of probabilistic and oscillatory infinite dimensional integrals by starting from corresponding finite dimensional integrals and considering their limit, stressing the relevant continuity properties. In the case of integrals with respect to probability (or bounded complex) measures (i.e. of integrals of the absolutely convergent type) one has the kind of usual continuity given by $\sigma$-additivity in the case of oscillatory integrals (non absolutely convergent type) one has continuity in the sense of suitable norms on the space of functions to be integrated.\\
More precisely our unified approach starts with the construction of projective systems of  functionals. Since in the probabilistic – functional analytic literature there are various concepts of projective systems, in particular according to the chosen underlying measure resp. topological space structure, we present our approach in great generality, specializing only ``au fur et \`a mesure'', when it becomes necessary.\\
In section 2 we present our concept of projective systems and of perfect inverse systems for general  spaces and functionals, then we illustrate it by two examples which might be more familiar to analysis and probability oriented readers.
In section 3 we discuss extensions of projective systems of linear functionals. We introduce cylindrical functions and distinguish between different extensions of projective systems of functionals definied on them. In Theor. 1 we characterize the minimal extension and show that it can also be described as the direct (or inductive) limit of a suitable direct system (Theor. 2). The maximal extension is also described, and a necessary and sufficient condition for it is identified ( Prop.1).\\
In Example 3 a situation is described where one does not have uniqueness of extensions, i.e. the action of the linear functional on the projective limit is not identified by just looking at its action on cylinder functions. \\
In Sect. 4 we study the continuity of extensions of projective systems of functionals.  A topology on cylindrical functions is introduced and then it is shown that the minimal extension of the projective system of functionals is closable, whenever it is continuous (on its natural domain, Theor.3).
We also relate compatible topologies to the properties of the closures of the graphs associated with the minimal extension, and show that non compatible topologies lead to non comparability of the closures of the graphs of the minimal extension.\\
In Sect. 5 we study projective systems of signed or complex bounded measure spaces and compare them with projective systems of probability measure spaces, on one hand, and on the other hand, we describe what goes wrong when the measurable spaces have an unbounded measure on them ( measures with infinite variation). We recall the notion of a compatible family of complex measures on a projective system with respect to a system of projections. We also recall Kolmogorov's type existence theorems in the case of probability measures. In the case of complex measures we formulate a condition of uniform boundedness of variation (Theorem 4) as a necessary condition for the family to arise from projections of a complex measure on the projective limit. We also present E. Thomas' extension of Kolmogorov's existence theorem to the bounded complex measures case, in a topological setting, with a projective family of Radon measures. We underline that many interesting cases are not covered by this extension, a problem which we take up again in   Section 6.\\
Still in Section 5 we discuss (in Sect. 5.1) the special case where the projective family is made of product measures, and we indicate already here cases where Thomas' result can not be applied.\\
In Sect. 5.2 we discuss the case where the projective family is constructed using semigroups of complex kernels. We associate to them pseudo processes (Markov resp. sub- Markov processes in the probabilistic case). We then specialize to vector spaces and consider the particular case of translation invariant complex kernels given in terms of the fundamental solutions of parabolic higher order or hyperbolic PDEs. As a consequence of Thomas' theorem we show that the projective family constructed by such kernels does not have a projective extension given by an integral with respect to a complex measure on the projective limit space.\\
In Sect.5.3 we study projective systems given by oscillatory integrals (the typical case of Feynman path integrals). We observe that they do not have any projective extension on the projective space $\R^\N$ given by (Lebesgue-type) improper integrals with respect to complex measures. This motivates looking for other types of projective extensions (i.e., extensions not in the form of integrals with respect to complex measures). \\
In Sect. 6 we present several examples of such types of extensions. The first one uses Fourier transforms on a real separable infinite dimensional Hilbert space to construct a projective family, and is based on an infinite dimensional version of Fresnel integrals and Parseval's formula, originated in \cite{AlHK,AlHKMa}. The continuity property of the projective limit functional is made explicit. A second example considers a projective systems of complex kernels associated with the fundamental solution of higher order PDEs of hyperbolic or parabolic type (of the form \eqref{PDE-p} below). Again a continuous extension of the projective system is constructed. In Theor. 6 its domain and action are described. 
In Theor. 7 a general Feynman-Kac formula is expressed in terms of the projective limit  discussed in Theor. 6. This formula solves the Schr\"odinger resp. higher-order parabolic / hyperbolic evolution equations involving integer powers of the Laplacian, perturbed by a potential.\\
A further example of projective extension which is not of the type of integration with respect to a complex measure type is provided in Theor. 8. This and related constructions have been used to solve Schr\"odinger and heat equations with a potential which is the sum of a quadratic potential and a magnetic potential, resp. a potential of polynomial growth. \\
In Sect. 7 we shortly present some conclusions.


\section{Projective systems of functionals}

Let us recall some classical definitions (see, e.g, \cite{BouT,Bou,Bau,Bau5,Loe,PfP,Schwartz}) which we will use all along the paper.

\begin{definition} \label{def1}
A semi-ordered set $A$ with partial ordering $\leq$ is called a {\em directed set} if for any $J,K\in A$, there exists $R\in A$ such that $J\leq R$, $K\leq R$
\end{definition}

\begin{definition}\label{def2}
Let us consider a family $\{E_J\}_{J\in A}$ of (non-empty)  sets $E_J$  labelled by the elements of a 
 non-empty directed  set $A$, called index set. Let us assume that for any $J,K\in A$, $J\leq K$, there exists a surjective map $ \pi^K_J:E_K\to E_J$  such that $\pi_K^K$ is the identity on $E_K$ and for  all $J\leq K\leq R$, $R\in A$,  one has $\pi^R_J=\pi^K_J\circ \pi^R_K$  (``consistency property''). Such a family $\{E_J,\pi^K_J\}_{J,K\in A}$ is called a {\em projective (or inverse) family of sets}.\\
The projective family $\{E_J,\pi^K_J\}_{J,K\in A}$ is called {\em topological} if each  $E_J$, $J\in A$, is a topological space and the maps $ \pi^K_J:E_K\to E_J$, $J\leq K$, are continuous.
\end{definition}

\begin{example}\label{example1}
A special case of this situation is given when we have a family  $(E_t^0)_{t\in I}$ of non empty sets  (``spaces'') labelled by the elements of a non-empty set $I$ and we take $A=\Fo(I)$ to be the totality of non empty subsets of  $I$ with a finite number of elements. The partial ordering  $\leq $ in $\Fo(I)$ is then defined in terms of  inclusions of sets, i.e. 
if $J,J'\in \Fo(I)$ and $J\subseteq J'$ then $J \leq J'$. Define  for each $J\in \Fo(I)$ 
$$E_J=\Pi_{t\in J}E_t^0,$$
i.e. $E_J$ is the ``Cartesian product of the $E^0_t$ along $J$''. We also say that $E_J$ is a product space (constructed from the $E^0_t$). We then have $E_{\{t\}}=E_t^0$. \\
If $J\leq K$, $J,K\in \Fo(I)$, then we can take $\pi^K_J$ to be the canonical projection mapping $\pi^{0K}_J$ from $E_K$ to $E_J$, given by 
$$\pi^{0K}_J \omega_K=\omega_J,$$
with $\omega_K:=(\omega _t)_{t\in K}$ understood as the element in $E_K$ with component $\omega_t$ in $E^0_t$. \\
If the spaces $(E_t^0)_{t\in I}$ are topological, then for any $J\in \Fo(I)$ we endowe the product space $E_J=\Pi_{t\in J}E_t^0$ with the product topology, i.e. the coarsest topology for which all the projections $\pi_{t'}:\Pi_{t\in J}E_t^0\to E_{t'}^0$, $t'\in J$, are continuous.
For $J,K\in\Fo(I)$, $J\leq K$, the projection $\pi^K_J$ from $E_K$ to $E_J$ is then continuous. 
By constructions $(E_J,\pi^K_J)_{J,K\in \Fo(I)}$ is a topological projective family.
\end{example}

\begin{example}\label{example2}
Let $(\Hi, \langle \; ,\; \rangle)$ be a real  Hilbert space. Let us consider the set $A$ of orthogonal projection operators $P$ onto   finite dimensional subspaces $P(\Hi)$ of $\Hi$. $A$ is a directed set, with partial order given by $P\leq Q$, $P,Q\in A$, iff $P(\Hi)\subseteq Q(\Hi)$.
Set $E_P:=P(\Hi)$. For $P\leq Q$,  let $\pi^Q_P:Q(\Hi)\to P(\Hi)$ be the projection $\pi^Q_P:=P|_{Q(\Hi)}$, where $P|_{Q(\Hi)}$ stands for the restriction of $P$ to $Q(\Hi)$. One has that  the  family $(E_P,\pi^Q_P)_{P,Q\in A}$ forms a projective family of sets. \\
In fact, if the finite dimensional vector spaces $E_P$ are endowed with the natural topology, the projections $\pi^Q_P$ are continuous and $(E_P,\pi^Q_P)_{P,Q\in A}$ forms a topological projective family of sets.
\end{example}

Given a projective family $\{E_J,\pi^K_J\}_{J,K\in A}$,
 we shall consider complex-valued functions $f_J$ defined on $E_J$, for any $J\in A $. $f_J$ is thus a map from $E_J$ into $\C$. We shall call $\hat E_J$ the space of all such functions on $E_J$.

Let $f_J\in \hat E_J$, $J\in A$. For any $K\in A$ with $J\leq K$ we can define the extension $\Ep_J^K(f_J)$ of $f_J$ to $E_K$ as the function belonging to $\hat E_K$ given by:
$$\Ep_J^K(f_J)(\omega_K):=f_J\big(\pi_J^K(\omega_K)\big), \qquad \omega_K\in E_K.$$
If $\{E_J,\pi^K_J\}_{J,K\in A}$ is a topological projective family and $f_J\in \hat E_J$ is a continuous function, then for any $K\in A$ with $J\leq K$, the extension $\Ep_J^K(f_J)$ is a continuous function on $E_K$.	

Let us now consider linear maps from subsets $\hat E_J^0\subseteq\hat E_J$ of $\hat E_J$ to $\C$, called functionals. $L_J$ is thus such a functional  if $L_J$ associates to a function $f\in\hat E_J^0$ a complex number $L_J(f)$ and for any $\alpha,\beta\in \C$, $f,g\in \hat E_J^0$ the following holds:
$$L_J(\alpha f+\beta g)=\alpha L_J(f)+\beta L_J(g) .$$ $\hat E_J^0$ is called domain of $L_J$, the set $\{L_J(f)\}_{f\in \hat E_J^0}$ is called range of $L_J$. We shall call $Map(\hat E_J)$ the family of all such functionals.

For $J\leq K$, let us define the map $\hat \pi^K_J:Map(\hat E_K)\to Map(\hat E_J)$ as the transport of any functional  $L_K\in Map(\hat E_K) $ induced by the map $\Ep^K_J$ from $\hat E_J$ to $\hat E_K$, given by:
\begin{equation}\label{eq3}
\hat \pi^K_J(L_K)(f_J):=L_K\left( (\Ep_J^K(f_J)\right), \qquad L_K\in Map(\hat E_K),
\end{equation}
where the domain of $\hat \pi^K_J(L_K)$ is given by
$$Dom(\hat \pi^K_J(L_K))=\{f_J\in \hat E_J, \, | \Ep_J^K(f_J)\in \hat E_K^0\}.$$

Let us consider a family of functionals $\{L_J,\hat E_J^0\}_{J\in A}$ labelled by the elements of an index set $A$.

\begin{definition}\label{def3}
We call the family  $\{L_J,\hat E_J^0\}_{J\in A}$ a {\em projective system of functionals} 
if for all $J,K\in A$ with $J\leq K$ the projective (or coherence or compatibility) conditions hold
\begin{equation}\nonumber
\Ep_J^K(f_J)\in \hat E_K^0, \qquad \forall f_J\in \hat E_J^0,
\end{equation}
\begin{equation}\label{eq4}
\hat \pi^K_J(L_K)(f_J)=L_J(f_J),\qquad \forall f_J\in \hat E_J^0.
\end{equation}
\end{definition}

\begin{definition}\label{def4}
Consider the general setting as in definition \ref{def2}.
We define the {\em projective (or inverse) limit } $E_A:=\varprojlim E_J$ of the projective family  $\{E_J, \pi^K_J\}_{J,K\in A}$ as
 the following subset of the direct (or Cartesian) product of the family $\{E_J\}_{J\in A}$:
$$E_A:=\{(x_J)\in \Pi_{J\in A}E_J,\, |\,  x_J=\pi^K_J(x_K)\; \hbox{\rm  for all }J\leq K,\, J,K\in A\}$$
\end{definition}

The space of all complex valued functions on $E_A$ will be denoted with $\hat E_A$.

\begin{Remark}
In the case of example \ref{example1} one has that $\varprojlim E_J$ is isomorphic to the product space $\Pi_{t\in I}E^0_t$.\\
In the case of example \ref{example2}, with $\Hi$ separable, we have $\Hi\subset E_A$, $\Hi\neq E_A$.  Indeed $\Hi$ is strictly included in the projective limit of its finite dimensional subspaces (in the sense of Def \ref{def4}).
To see this, let us choose an orthonormal base of $\Hi$, then using this base we have that $\Hi$ is isomorphic to $ l_2$ and the finite  dimensional subspaces of $\Hi$ can be identified with the sequences $\{x_n\}\in l_2$ with only a finite number of terms different from 0. Now the projective limit of the family of finite dimensional subspaces of $l_2$ in the sense of Def. \ref{def4} is the space $\R^\N$ of all sequences, that strictly includes $l_2$.
\end{Remark}

Let $\tilde E:=\Pi_{J\in A}E_J$.
Let $\tilde \pi_J:\tilde E\to E_J$ be the coordinate projection of $\tilde E$ into $E_J$, so that if $\tilde \omega=\{\omega_J, J\in A\}\in \tilde E$ then $\tilde \pi _J(\tilde\omega)=\omega_J$. Let $\pi_J:=\tilde \pi_J|E_A$ be the restriction of $\tilde \pi _J$ to $E_A$. 
One has that for any $J,K\in A$, with $J\leq K$ \begin{equation}\label{rel-pi}\pi_J=\pi^K_J\circ \pi_K.\end{equation}

If $(E_J,\pi_J^K)_{J,K\in A}$ is a topological projective family, then $E_A=\varprojlim E_J$ (as defined by Def. \ref{def4}) will be  endowed with the coarsest topology making all the projection maps $\pi_J:E_A\to E_J$ continuous. This is also called {\em initial } or {\em inductive } topology \cite{BouT}.

\begin{Remark}
Given a general projective family   $\{E_J, \pi^K_J\}_{J,K\in A}$, two problems may occur:
\begin{enumerate}
\item Even if $E_J\neq \emptyset$ for any $J\in A$, it might happen that $E_A=\emptyset$. See, e.g. \cite{Gha}.
\item Even if all the projections $(\pi^K_J)_{J,K\in A}$ are surjective, the maps $\pi_J:E_A\to E_J$ may fail to be surjective. See, e.g. \cite{Lef}.
\end{enumerate}
\end{Remark}

\begin{definition}
A projective family $\{E_J, \pi^K_J\}_{J,K\in A}$ is called {\em perfect inverse system} if for all $J\in A$, $x_J\in E_J$, there exist an $x\in E_A$ (with $E_A$  as in Def. \ref{def4}) such that $x_J=\pi_J x$. In this case all the projections are surjective.
\end{definition}

\begin{Remark}
In the terminology of \cite{Boc} (see also \cite{Bo2}) a perfect inverse system  $\{E_J, \pi^K_J\}_{J,K\in A}$ is called {\em simply maximal}.
\end{Remark}
 
One can easily verify that the projective families presented in examples \ref{example1} and \ref{example2} are perfect inverse systems. In fact in the following we shall always assume, unless otherwise stated, that the inverse systems we are considering are perfect.\\

Given a function $f_J \in \hat E_J$, $J\in A$, it can be extended to a function $\Ep_J^A f_J:=\Ep_J^A( f_J)$ on the projective limit $E_A$ in the following way
$$\Ep_J^Af_J(\omega):=f_J(\pi_J\omega), \qquad \omega \in E_A.$$
By eq \eqref{rel-pi},  the extension maps $\Ep_J^A:\hat E_J\to \hat E_A$ satisfy the following condition for any $J,K\in A$, with $J\leq K$:
 \begin{equation}\label{rel-Ep}\Ep^A_J=\Ep^A_K\circ \Ep_J^K.\end{equation}

If $(E_J,\pi_J^K)_{J,K\in A}$ is a topological projective family, then all the extensions  $\Ep_J^K:\hat E^J\to \hat E_A$ and $\Ep_J^A:\hat E^J\to \hat E_A$ map continuous functions into continuous functions.\\

Given a projective system of functionals $\{L_J, \hat E^0_J\}_{J\in A}$ (in the sense of Def \ref{def3}), 
we shall write $\Cil$ for the family
 $$\Cil=\cup_{J\in A}\Ep^A_J(\hat E_J^0).$$
The functions belonging to $\Cil$ will be called {\em cylindrical (or cylinder) functions}.
In particular, in the case of example \ref{example1} or example \ref{example2}, the cylindrical functions depend only on a finite number of variables. The following lemma states the injectivity of the extension map $\Ep^A_J:\hat E_J\to \hat E_A$.

\begin{lemma}\label{lemmaunique}
Let $\{E_J, \pi^K_J\}_{J,K\in A}$ be a perfect inverse system. Then for any $J\in A$, $f,g\in \hat E_J$:
$$f=g \quad \Leftrightarrow \quad \Ep_J^Af=\Ep_J^Ag.$$
\end{lemma}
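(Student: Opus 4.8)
The plan is to reduce the statement to the surjectivity of the projection $\pi_J : E_A \to E_J$, which is exactly what the hypothesis of a perfect inverse system provides. The forward implication is immediate and uses nothing: if $f = g$ on $E_J$, then for every $\omega \in E_A$ we have $\Ep_J^A f(\omega) = f(\pi_J\omega) = g(\pi_J\omega) = \Ep_J^A g(\omega)$, so $\Ep_J^A f = \Ep_J^A g$. (One could equally phrase the whole lemma as the assertion that $\Ep_J^A$ is injective, and then only the converse needs argument; by linearity of $\Ep_J^A$ it would even suffice to show $\Ep_J^A h = 0 \Rightarrow h = 0$, but this reduction is not really needed.)

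For the converse, assume $\Ep_J^A f = \Ep_J^A g$, i.e. $f(\pi_J\omega) = g(\pi_J\omega)$ for every $\omega \in E_A$. Fix an arbitrary $x_J \in E_J$. Since $\{E_J,\pi^K_J\}_{J,K\in A}$ is a perfect inverse system, by the defining property there exists $x \in E_A$ with $\pi_J x = x_J$. Then
$$f(x_J) = f(\pi_J x) = \Ep_J^A f(x) = \Ep_J^A g(x) = g(\pi_J x) = g(x_J).$$
As $x_J \in E_J$ was arbitrary, $f = g$ on $E_J$, which is the claim.

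There is essentially no obstacle here: the only substantive ingredient is the perfectness hypothesis, which guarantees $\pi_J$ is onto $E_J$, and hence that the pullback $\Ep_J^A$ along $\pi_J$ separates functions on $E_J$. It is worth remarking (as a sanity check and to motivate why the hypothesis cannot be dropped) that without perfectness $\pi_J$ may fail to be surjective, as noted in the Remark following Definition \ref{def4}, and then two functions on $E_J$ differing only on $E_J \setminus \pi_J(E_A)$ would have the same extension to $E_A$; so the hypothesis is genuinely used, even though the argument itself is a one-line diagram chase.
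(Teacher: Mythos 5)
Your proof is correct and follows essentially the same route as the paper's: the only substantive ingredient in both is the surjectivity of $\pi_J$ guaranteed by the perfect inverse system hypothesis, which lets you lift an arbitrary $x_J\in E_J$ to some $x\in E_A$ and evaluate. The paper merely phrases the converse as a contrapositive (starting from $f(x_J)\neq g(x_J)$), which is the same argument.
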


\begin{proof}
The implication $f=g \Rightarrow \Ep_J^Af=\Ep_J^Ag$ is trivial.\\
To prove the converse $\Ep_J^Af=\Ep_J^Ag \Rightarrow f=g$, let us assume that there exists an $x_J\in E_J$ such that $f(x_J)\neq g(x_J)$. Let us consider an element $x\in E_A$ such that $\pi_Jx=x_J$. Such an element exists because the projections $\pi_J$ are surjective by assumption. We thus have by the definition of $\Ep_J^Af(x)$, resp. $\Ep_J^A(x)$,  for $f\in \hat E_J$, resp. $g\in \hat E_J$ $$\Ep_J^Af(x)=f(x_J)\neq g(x_J)=\Ep_J^Ag(x)$$ and thus $\Ep_J^Af\neq\Ep_J^Ag $.
\end{proof}


\section{Extensions of a projective system of functionals}

Given a projective system of functionals $\{ L_J,\hat E^0_J\}_{J\in A}$, we shall denote by $\Cil_0\subset \Cil$ the subfamily of cylindrical functions consisting of those cylindrical functions which are obtained by  extensions  $\Ep_J^Af_J$ of $f_J \in \hat E^0_J$ to the projective limit $E_A$, i.e.:
$$\Cil_0:=\cup_{J\in A}\Ep^A_J(\hat E_J^0)=\{f\in \Cil\, |\, f=\Ep_J^A f_J, \hbox{ for some } J\in A, f_J \in \hat E^0_J   \}.$$ 
 
\begin{definition}\label{def5}
A {\em projective extension} $(L,D(L))$ of a projective system of functionals $\{L_J, \hat E_J^0\}_{J\in A}$  is  a functional $L$ with domain $D(L)\subseteq \hat E_A$ ($\hat E_A$ being the complex-valued functions on $E_A$), such that 
\begin{itemize}
\item  $\Cil_0 \subseteq D(L)$, 
\item $L(\Ep_J^A f_J)=L_J(f_J)$, for all $f_J\in \hat E_J^0$.
\end{itemize} 
\end{definition}

\begin{theorem}\label{th-minimalextension}
Let $\{E_J, \pi^K_J\}_{J,K\in A}$ be a  perfect inverse system and let $\{L_J, \hat E_J^0\}_{J\in A}$ be a projective system of functionals. Then a projective extension  $(L,D(L))$ exists. It is the functional defined by:
 \begin{eqnarray}\nonumber
D(L)&:=& \{f\in \hat E_A,\,|\, \hbox{ \rm there exists  } J\in A, f_J\in \hat E_J^0, \,f=\Ep_J^Af_J\}=\Cil_0\\
L(f) &:= & L_J(f_J), \qquad f=\Ep_J^Af_J, \, f_J\in \hat E_J^0. \nonumber
\end{eqnarray}
This functional is  ``minimal'' in the sense that any other extension $( L',D( L'))$ of the  projective system of functionals $\{L_J, \hat E^0_J\}_{J\in A}$ is such that
$D(L)\subseteq D(L')$ and
 $L'(f)=L(f)$ for all $f\in D(L)$. 
\end{theorem}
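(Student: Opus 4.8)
The plan is to verify directly that the pair $(L, D(L))$ with $D(L) = \Cil_0$ is well-defined, is indeed a projective extension in the sense of Definition \ref{def5}, and then is contained in any other projective extension.

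First I would address \emph{well-definedness} of $L$, which is the main subtlety. The issue is that a single cylindrical function $f \in \Cil_0$ may be representable in more than one way, say $f = \Ep_J^A f_J = \Ep_K^A f_K$ with $f_J \in \hat E_J^0$ and $f_K \in \hat E_K^0$; one must check that $L_J(f_J) = L_K(f_K)$ so that the prescription $L(f) := L_J(f_J)$ does not depend on the chosen representative. To do this I would use that $A$ is a directed set: pick $R \in A$ with $J \leq R$ and $K \leq R$. By the coherence property of the projective system of functionals (Definition \ref{def3}), $\Ep_J^R(f_J) \in \hat E_R^0$ and $\Ep_K^R(f_K) \in \hat E_R^0$, and moreover $\hat \pi^R_J(L_R)(f_J) = L_J(f_J)$, i.e. $L_R(\Ep_J^R f_J) = L_J(f_J)$, and similarly $L_R(\Ep_K^R f_K) = L_K(f_K)$. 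On the other hand, by the composition rule \eqref{rel-Ep} for extension maps, $\Ep_R^A(\Ep_J^R f_J) = \Ep_J^A f_J = f = \Ep_K^A f_K = \Ep_R^A(\Ep_K^R f_K)$, and since $\{E_J, \pi^K_J\}$ is a perfect inverse system, Lemma \ref{lemmaunique} gives that $\Ep_R^A$ is injective, hence $\Ep_J^R f_J = \Ep_K^R f_K$ as elements of $\hat E_R$. Applying $L_R$ to both sides yields $L_J(f_J) = L_R(\Ep_J^R f_J) = L_R(\Ep_K^R f_K) = L_K(f_K)$, which is exactly what is needed. This is the step I expect to be the main obstacle, since everything else is essentially bookkeeping.

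Next I would check that $(L, D(L))$ satisfies the two defining properties of a projective extension. The inclusion $\Cil_0 \subseteq D(L)$ holds by the very definition $D(L) = \Cil_0$. The identity $L(\Ep_J^A f_J) = L_J(f_J)$ for all $f_J \in \hat E_J^0$ is immediate from the definition of $L$ on the representative $\Ep_J^A f_J$ (and is unambiguous by the previous paragraph). Linearity of $L$ on its domain follows from linearity of each $L_J$: given $f = \Ep_J^A f_J$ and $g = \Ep_K^A g_K$ in $\Cil_0$ and scalars $\alpha, \beta \in \C$, choose $R \geq J, K$ in $A$; then $f = \Ep_R^A(\Ep_J^R f_J)$ and $g = \Ep_R^A(\Ep_K^R g_K)$ with $\Ep_J^R f_J, \Ep_K^R g_K \in \hat E_R^0$, so $\alpha f + \beta g = \Ep_R^A(\alpha \Ep_J^R f_J + \beta \Ep_K^R g_K) \in \Cil_0$ (note $\hat E_R^0$ is a linear subspace since $L_R$ is a linear functional on it), and applying the definition of $L$ together with linearity of $L_R$ and the coherence identities gives $L(\alpha f + \beta g) = \alpha L(f) + \beta L(g)$.

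Finally I would prove minimality. Let $(L', D(L'))$ be any projective extension of $\{L_J, \hat E_J^0\}_{J \in A}$. By Definition \ref{def5}, $\Cil_0 \subseteq D(L')$, hence $D(L) = \Cil_0 \subseteq D(L')$. Moreover, for any $f \in D(L) = \Cil_0$, write $f = \Ep_J^A f_J$ with $f_J \in \hat E_J^0$; then $L'(f) = L'(\Ep_J^A f_J) = L_J(f_J) = L(f)$, where the middle equality is the second defining property of a projective extension applied to $L'$ and the last is the definition of $L$. This shows $L'$ agrees with $L$ on $D(L)$, completing the proof. \finedim
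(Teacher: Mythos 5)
Your proposal is correct and follows essentially the same route as the paper: the crux in both is the well-definedness of $L$, established by lifting two representations $f=\Ep_J^Af_J=\Ep_K^Af_K$ to a common index $R\geq J,K$, invoking the injectivity of $\Ep_R^A$ from Lemma \ref{lemmaunique} (which is where the perfect-inverse-system hypothesis enters) to conclude $\Ep_J^Rf_J=\Ep_K^Rf_K$, and then applying the compatibility condition \eqref{eq4}. Your additional verifications of linearity and minimality are routine and consistent with what the paper leaves implicit.
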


\begin{proof}
The functional $L$ is well defined. Indeed let us consider a cylindrical function in the restricted class $\Cil_0$, i.e. $f\in\Cil_0$ that can be obtained both as the extension of $f_J\in \hat E_J^0$ and of $f_K\in \hat E_K^0$, i.e. $f=\Ep_J^Af_J=\Ep_K^Af_K$, $J,K\in A$. By the very definition of directed set, there exists an $R\in A$, with $J\leq R$ and $K\leq R$. Let $f_R:=\Ep_J^Rf_J$ and $g_R:=\Ep_K^Rf_K$. One can easily see that $\Ep_R^Af_R=\Ep_R^Ag_R=f$. Thus, by lemma \ref{lemmaunique} $f_R=g_R$ and $L_R(f_R)=L_R(g_R)$. On the other hand, by the projectivity condition \eqref{eq4} it follows that:
$$L_J(f_J)=L_R(\Ep_J^Rf_J)=L_R(f_R)=L_R(g_R)=L_R(\Ep_K^Rf_K)=L_K(f_K).$$ 
Thus $L$ is thus unambiguosly defined by $L(f)=L_J(f_J)=L_k(f_K)$.
\end{proof}

\begin{definition}\label{def6}We shall call the projective extension  $(L,D(L))$ described in theorem \ref{th-minimalextension} {\em the minimal extension} of the projective system of functionals. In the following we shall denote the minimal extension by $(L_{min},D(L_{min}))$. We shall write $L_{min}=\varprojlim L_J$.
\end{definition}

The domain of the minimal extension of a projective system of functionals $\{L_J, \hat E^0_J\}_{J\in A}$ can be described in terms of  the direct or inductive limit of a suitable  direct system. We recall here the definition of direct system and direct limit, see also \cite{Rot}. 

\begin{definition} \label{def8} Let $(A,\leq)$ be a directed set. Let $\{ E_J\}_{J\in A}$ be a family of (non empty) sets indexed by the elements of $A$, endowed with a family of maps $F_{JK }: E_J\to E_K$, for $J\leq K$, such that
\begin{itemize}
\item  $F_{JJ}$ is the identity of $E_J$ for any $J\in A$, 
\item  $F_{KR}\circ F_{JK}=F_{JR}$ for all $J\leq K\leq R$.
\end{itemize}
Then the pair $(  E_J,F_{JK})_{J,K\in A}$ is called a {\em direct system} on $A$.\\
The {\em direct (or inductive) limit} of the direct system $(  E _J,F_{JK})_{J,K\in A}$ is denoted by $ \varinjlim  E_J$ and defined as the disjoint union $\cup_J E_J$ modulo an equivalence relation $\sim$:
$$\varinjlim  E_J=\cup_J E_J/\sim$$
where, if $\omega_J\in  E_J$ and $\omega_K\in  E_K$, then $\omega _J\sim \omega_K$ if there is some $R\in A$ with $J\leq R$, $K\leq R$, and $F_{JR}(\omega_J)=F_{KR}(\omega_K)$.
\end{definition}

A family of maps $F_J:E_J\to  \varinjlim  E_J$ naturally arises, where $F_J:E_J\to  \varinjlim  E_J$ maps each element of $E_J$ into its equivalence class. Further $F_J=F_K\circ F_{JK}$ for all $J,K\in A$, with $J\leq K$.\\
If the sets $(E_J)_{J\in A}$ are topological spaces and the maps $(F_{JK})_{J,K\in A}$ are continuous, the family $(  E_J,F_{JK})_{J,K\in A}$ is called a {\em topological direct system}. Its direct limit is the space $\varinjlim  E_J$ endowed with the finest topology making all the maps $F_J:E_J\to  \varinjlim  E_J$ continuous.

Direct and projective limits are dual in the sense of categoy theory, see, e.g., \cite{McLane}.\\
An application of the concept of Def \ref{def8} is found in considering the
 family of sets $\{\hat E^0_J\}_{J\in A}$ of Def. \ref{def3}, labelled by the elements of the directed set $A$, endowed with the extension maps 
$$\Ep^K_J:\hat E^0_J\to \hat E^0_K,\qquad J\leq K, J,K\in A.$$
This family forms a direct system of sets, and its {\em direct} or {\em inductive} limit in the sense of Def. \ref{def8} is equal to the restricted set $\Cil_0$ of cylindrical functions (defined at the beginning of this section), as stated in the following theorem.

\begin{theorem}
Under the same assumptions of Theorem \ref{th-minimalextension}, the domain of the minimal extension is given by the direct limit of the direct system $\{\hat E^0_J,\Ep_J^K \}_{J,K\in A}$:
$$D(L_{min})=\varinjlim \hat E_0^J.$$ 
\end{theorem}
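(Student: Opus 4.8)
The plan is to identify both sides of the claimed equality as the same set by exhibiting a natural bijection. Recall that by Theorem \ref{th-minimalextension} one has $D(L_{min})=\Cil_0=\cup_{J\in A}\Ep_J^A(\hat E_J^0)$, while $\varinjlim\hat E_J^0$ is, by Definition \ref{def8}, the disjoint union $\cup_{J\in A}\hat E_J^0$ quotiented by the relation $f_J\sim f_K$ iff $\Ep_J^R(f_J)=\Ep_K^R(f_K)$ for some $R\geq J,K$. First I would check that $\{\hat E_J^0,\Ep_J^K\}_{J,K\in A}$ really is a direct system in the sense of Definition \ref{def8}: the inclusion $\Ep_J^K(\hat E_J^0)\subseteq\hat E_K^0$ for $J\leq K$ is exactly the coherence condition of Definition \ref{def3}, and the identities $\Ep_J^J=\mathrm{id}_{\hat E_J}$ and $\Ep_J^R=\Ep_K^R\circ\Ep_J^K$ ($J\leq K\leq R$) follow at once from the definition of the extension maps together with the consistency property $\pi_J^R=\pi_J^K\circ\pi_K^R$ of the projections.

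Next I would define $\Phi:\cup_{J\in A}\hat E_J^0\to\Cil_0$ by $\Phi(f_J):=\Ep_J^A(f_J)$, which is surjective by the very definition of $\Cil_0$, and show it factors through the quotient. For well-definedness on $\sim$-classes: if $f_J\sim f_K$, pick $R\geq J,K$ with $\Ep_J^R(f_J)=\Ep_K^R(f_K)$; applying $\Ep_R^A$ and using relation \eqref{rel-Ep} in the form $\Ep_J^A=\Ep_R^A\circ\Ep_J^R$ gives $\Phi(f_J)=\Ep_R^A(\Ep_J^R(f_J))=\Ep_R^A(\Ep_K^R(f_K))=\Phi(f_K)$. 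So $\Phi$ descends to a surjection $\bar\Phi:\varinjlim\hat E_J^0\to\Cil_0$. For injectivity of $\bar\Phi$: if $\Ep_J^A(f_J)=\Ep_K^A(f_K)$, choose $R\geq J,K$ (possible since $A$ is directed) and rewrite both sides via \eqref{rel-Ep} as $\Ep_R^A(\Ep_J^R(f_J))=\Ep_R^A(\Ep_K^R(f_K))$; Lemma \ref{lemmaunique}, applicable because the inverse system is perfect and hence every $\pi_R$ is surjective, yields $\Ep_J^R(f_J)=\Ep_K^R(f_K)$, i.e. $f_J\sim f_K$. Thus $\bar\Phi$ is a bijection and $D(L_{min})=\Cil_0=\varinjlim\hat E_J^0$.

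The step I expect to be the crux is the injectivity of $\bar\Phi$, which is where the perfectness hypothesis enters through Lemma \ref{lemmaunique}: it is precisely the surjectivity of the $\pi_R$ that lets one descend from an equality of extensions on the full projective limit $E_A$ back to an equality at the finite stage $R$. If one dropped this assumption, two distinct elements of some $\hat E_R^0$ could have identical extensions to $E_A$, and $D(L_{min})$ would only be a proper quotient of $\varinjlim\hat E_J^0$ rather than the direct limit itself; everything else in the argument is formal book-keeping with the compositional identities for the $\Ep$'s and the directedness of $A$.
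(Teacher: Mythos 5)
Your proof is correct and takes essentially the same route as the paper: both directions of the bijection between $\varinjlim \hat E_J^0$ and $\Cil_0$ are established via the compositional identity \eqref{rel-Ep}, with Lemma \ref{lemmaunique} (hence perfectness of the inverse system) invoked exactly where you place it, namely to descend from $\Ep_R^A(\Ep_J^R f_J)=\Ep_R^A(\Ep_K^R f_K)$ to $\Ep_J^R f_J=\Ep_K^R f_K$. Your preliminary verification that $\{\hat E_J^0,\Ep_J^K\}$ is a direct system, and your closing remark on what fails without perfectness, are sound additions but do not change the substance of the argument.
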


\begin{proof}
We prove a 1 to 1 correspondence between  $D(L_{min})=\Cil_0$ and $\varinjlim \hat E_0^J$.
Let $g\in \varinjlim \hat E_0^J$, i.e., by Def. \ref{def8}, $g$ is an equivalence class of functions $[f_J]$ such that for any two elements $f_J$ and $f_K$ belonging to $g\equiv [f_J]$ there is an $R\in A$, with $J\leq R$ and $K\leq R$, such that $\Ep_J^R(f_J)=\Ep_K^R(f_K)$. The functions belonging to this equivalence class $g$ define a unique cylindrical function. Indeed let $f_J,f_K\in g$, and let $\Ep_J^Af_J$ and $\Ep_K^Af_K$ be the associated cylindrical functions. One has that, for any $x\in E_A$ (with $E_A$ the projective limit of the $(E_J,\pi_J^K)$, $J,K\in A$) we have $\Ep_J^Af_J(x)=\Ep_K^Af_R(x)$. Indeed let $R\in A$, with $J\leq R$ and $K\leq R$, such that $\Ep_J^R(f_J)=\Ep_K^R(f_K)$ (such an $R$ exists as $f_J$ and $ f_K$ belong to the same equivalence class $g$). Then by eq \eqref{rel-Ep} $$\Ep_J^Af_J(x)=\Ep_R^A\circ\Ep_J^Rf_J(x)=\Ep_R^A\circ\Ep_K^Rf_K(x)=\Ep_K^Af_K(x).$$
Conversely, under the assumption that $\{E_J, \pi^K_J\}_{J,K\in A}$ is a  perfect inverse system, to any cylindrical function belonging to $\Cil_0$  it is possible to associate an element of $\varinjlim \hat E_0^J$. Indeed let $f\in\Cil_0$ of the form $f=\Ep_J^Af_J$, $J\in A$, $f_J\in \hat E^0_J$, and let $[f_J]\in \varinjlim \hat E_0^J$ be the equivalence class of $f_J$. Let us assume that $f$ can be also be obtained as $f=\Ep_K^Af_K$, for some $K\in A$, $f_K\in \hat E^0_K$, then we shall show that $[f_K]=[f_J]$, i.e. there exists an $R\in A$, with $J\leq R$ and $K\leq R$ such that $\Ep_J^Rf_J=\Ep_K^Rf_K$. Indeed let $R\in A$ with $J\leq R$ and $K\leq R$ and let us consider the functions $f_R, f'_R\in \hat E_R^0$ of the form $f_R:=\Ep_J^Rf_J$ and $f'_R:=\Ep_K^Rf_K$. By eq \eqref{rel-Ep} it follows that 
$\Ep_R^Af_R=\Ep_R^Af'_R$, indeed $\Ep_R^Af_R=\Ep_R^A\Ep_J^Rf_J=\Ep_J^Af_J=f$ and analogously $\Ep_R^Af'_R=\Ep_R^A\Ep_K^Rf_K=\Ep_K^Af_K=f$, and by lemma \ref{lemmaunique} it follows that $f_R=f'_R$ (since we assumed that $(E_J,\pi_J^K)$, $J,K\in A$, is a perfect inverse system).
\end{proof}


We can ask ourselves whether there exist a ``maximal'' extension   of a projective system of functionals $\{L_J, \hat E^0_J\}_{J\in A}$, i.e. a functional $(L_{max},D(L_{max}))$ such that for any extension $(\tilde L,D(\tilde L))$ of  $\{L_J, \hat E^0_J\}_{J\in A}$ one has that
\begin{eqnarray}\nonumber
 & &D(\tilde L)\subseteq D( L_{max})\\
& &  L_{max}(f)=\tilde L(f),\qquad \forall f\in D(\tilde L).\nonumber
\end{eqnarray}
The problem is strictly connected to the uniqueness property of the extensions of a projective system. Indeed if there are two  extensions $(L,D(L))$ and $(L',D(L'))$ such that there exists an element $f\in D(L)\cap D(L')$, with  $L(f)\neq L'(f)$, then it is not possible to construct an extension $\tilde L$ of both $L$ and $L'$, as $\tilde L $ would be ambiguously defined on the element $f$. The converse is also true, as is stated in the following proposition.

\begin{proposition}\label{prop1}
Let $\{L_J, \hat E^0_J\}_{J\in A}$ be a projective system of functionals and let $\Fo=\{ (L,D(L)\}$ be a non void family of projective extensions of $\{L_J, \hat E^0_J\}_{J\in A}$. The family $\Fo$ has a  maximal element if and only if it satisfies the following ``uniqueness property'':\\
whenever two extensions $(L,D(L)),(L',D(L'))\in \Fo$ have an element  $f\in D(L)\cap D(L')$, one has that $L(f)=L'(f)$.
\end{proposition}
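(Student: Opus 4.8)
The equivalence splits into the two implications. The implication ``$\Fo$ has a maximal element $\Rightarrow$ uniqueness property'' is immediate: if $(L_{max},D(L_{max}))$ is a maximal element of $\Fo$, so that $D(L)\subseteq D(L_{max})$ and $L_{max}|_{D(L)}=L$ for every $(L,D(L))\in\Fo$, then for any $(L,D(L)),(L',D(L'))\in\Fo$ and any $f\in D(L)\cap D(L')$ one gets $L(f)=L_{max}(f)=L'(f)$.

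For the converse, assume the uniqueness property. The plan is to build the maximal element as the functional $(L_{max},D(L_{max}))$ whose graph is the union of the graphs of all members of $\Fo$:
$$D(L_{max}):=\bigcup_{(L,D(L))\in\Fo}D(L),\qquad L_{max}(f):=L(f)\ \text{ whenever }\ f\in D(L),\ (L,D(L))\in\Fo .$$
I would then verify, in order, that: (i) $L_{max}$ is single-valued; (ii) $D(L_{max})$ is a linear subspace of $\hat E_A$ and $L_{max}$ is linear on it; (iii) $(L_{max},D(L_{max}))$ is a projective extension of $\{L_J,\hat E_J^0\}_{J\in A}$; (iv) it is maximal. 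Step (i) is precisely the content of the uniqueness property: if $f\in D(L)\cap D(L')$ with $(L,D(L)),(L',D(L'))\in\Fo$ then $L(f)=L'(f)$, so the value assigned to $f$ does not depend on the chosen member of $\Fo$. Step (iii) is then routine: since $\Fo\neq\emptyset$, any $(L,D(L))\in\Fo$ is a projective extension, whence $\Cil_0\subseteq D(L)\subseteq D(L_{max})$ and $L_{max}(\Ep_J^Af_J)=L(\Ep_J^Af_J)=L_J(f_J)$ for all $J\in A$ and $f_J\in\hat E_J^0$. Step (iv) holds by construction, since $D(L)\subseteq D(L_{max})$ and $L_{max}|_{D(L)}=L$ for every $(L,D(L))\in\Fo$.

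The real content is step (ii), and this is where I expect the main obstacle. Given $f\in D(L)$, $g\in D(L')$ with $(L,D(L)),(L',D(L'))\in\Fo$ and $\alpha,\beta\in\C$, one wants $\alpha f+\beta g\in D(L_{max})$ together with $L_{max}(\alpha f+\beta g)=\alpha L(f)+\beta L'(g)$. The idea is to introduce the ``join'' $(L\vee L',\,D(L)+D(L'))$, defined by $(L\vee L')(f'+g'):=L(f')+L'(g')$ for $f'\in D(L)$, $g'\in D(L')$; applying the uniqueness property on $D(L)\cap D(L')$ one checks this is well defined and linear, and that $(L\vee L',\,D(L)+D(L'))$ is again a projective extension dominating both $(L,D(L))$ and $(L',D(L'))$. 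Then $\alpha f+\beta g$ lies in the domain of such an extension, and reading off its value there gives the required identity. The point that needs care is that $L\vee L'$ be again a member of $\Fo$: this is automatic when $\Fo$ is taken to be the family of \emph{all} projective extensions of $\{L_J,\hat E_J^0\}_{J\in A}$ — the setting of the discussion preceding the statement — and in general it can be arranged by replacing $\Fo$ with its closure under such joins, which still satisfies the uniqueness property and has a maximal element dominating the original $\Fo$. I expect the author's proof to phrase this directly in terms of the family of all extensions, so that (ii) reduces to a short formal check.
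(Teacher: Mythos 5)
Your treatment of the necessity direction matches the paper's, which simply records it as trivial. For sufficiency your route is genuinely different: you construct the dominating element directly as the union of all graphs, made single-valued by the uniqueness property, and then repair linearity with the join $(L\vee L',\,D(L)+D(L'))$. The paper instead runs Zorn's lemma on $\Fo$ ordered by $(L,D(L))\leq (L',D(L'))$ iff $D(L)\subset D(L')$ and $L'=L$ on $D(L)$, bounding each chain by the union of its domains; since a union of a nested family of subspaces is a subspace, the linearity problem you isolate never surfaces along a chain. Your construction is explicit and choice-free, at the cost of the join step; on the other hand, Zorn only delivers an element with no strict majorant in $\Fo$, and upgrading that to an element dominating \emph{all} of $\Fo$ requires exactly your join argument, so your version makes explicit a step the paper leaves implicit.

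The one step that does not hold as stated is the claim that the closure of $\Fo$ under joins still satisfies the uniqueness property: pairwise agreement on intersections of domains does not survive summing domains. Take $f,g$ with $D(L_1)=\Cil_0+\C f$, $D(L_2)=\Cil_0+\C g$, $D(L_3)=\Cil_0+\C (f+g)$, the three domains pairwise intersecting only in $\Cil_0$, and choose the extensions so that $L_3(f+g)\neq L_1(f)+L_2(g)$. Then $\Fo=\{L_1,L_2,L_3\}$ satisfies the uniqueness property, yet $L_1\vee L_2$ disagrees with $L_3$ at $f+g$; moreover no linear functional can dominate all three, so for such an $\Fo$ the ``if'' direction itself fails if one insists that the maximal element be a linear functional on a linear domain. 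The statement is really intended, and the paper's proof is really written, for $\Fo$ the family of all projective extensions (its closing sentence says as much); in that case your join is automatically a member of $\Fo$, the uniqueness hypothesis applies to it directly against any other member, and your argument closes. So keep the direct construction, but replace the ``closure under joins'' fix by the hypothesis you actually use, namely that $\Fo$ is closed under joins (e.g.\ is the family of all extensions).
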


\begin{Remark}
Under the assumption that $(E_J, \pi_{J}^K)_{J,K\in A}$ is a perfect inverse system, it is always possible to construct a non void family of  projective extensions of $\{L_J, \hat E^0_J\}_{J\in A}$, as the minimal extension exists by Theorem \ref{th-minimalextension}.
\end{Remark}

\begin{proof} (of Proposition \ref{prop1}) It is trivial, as  remarked before the statement of Prop. \ref{prop1}, that the uniqueness property is a necessary condition for the existence of a maximal element in the family $\Fo$. \\
To prove the sufficiency, let us remark that the family  $\Fo$
is a non void  partially ordered set, with partial order $\leq$,  where by definition two extensions  $(L,D(L)),(L',D(L'))\in \Fo$ satisfy  $(L,D(L))\leq (L',D(L'))$ if $D(L)\subset D(L')$ and $L(f)=L'(f)$ $\forall f \in D(L)\cap D(L')$). Moreover any chain $\{L_\alpha,D(L_\alpha)\}_\alpha \subset \Fo$ has an upper bound  $(\tilde L;D(\tilde L))$ given by
$$  D(\tilde L))=\cup_\alpha D(L_\alpha)\subset \hat E_A, \qquad L(f)=L_\alpha(f), \; f\in D(L_\alpha),$$ 
(the set inclusion being clear since all $D(L_\alpha)$ are in $\hat E_A$).
By Zorn's lemma (e.g. \cite{HewStr} pag 13-14) we  deduce then that the set of all possible extensions of  a projective system of functionals has a maximal element.
\end{proof}

Given two extensions $(L,D(L))$ and $(L',D(L'))$ of a projective system of functionals $\{L_J, \hat E^0_J\}_{J\in A}$, in general it is not always true that they coincide on the intersection of their domains, i.e. that for any $f:E_A\to \C$ such that $f\in D(L)\cap D(L')$, one has that $L(f)=L'(f)$. Indeed let us consider the following counterexample.\par

\begin{example}\label{es-not-uniqueness}
Let us consider example \ref{example1} in the particular case where $I=\N$, hence
 $A=\Fo(\N)$ is the totality of non empty finite subsets of $\N$. Given an element $J\in A$, let $E_J$ be given by
$E_J=\R^J$.
We also identify $E_A$ with   $E_A=\R^\N$, the set of all real valued sequences.\\
Let $\mu$ denote a probability measure on $\R$ and let $(L_J,\hat E_J^0)$ be given by
\begin{eqnarray}\nonumber
\hat E_J^0&=& B_b(E_J)\\
L_J(f) &=& \int _{E_J}f\ d\Pi_{n\in J}\mu, \qquad f \in \hat E_J^0 \nonumber
\end{eqnarray}
where $B_b(E_J)$ is the set of Borel bounded functions on $E_J$ and $\Pi_{n\in J}d\mu$ denotes the measure on $E_J$ obtained as the product of $|J|<\infty$ copies of the probability measure $\mu$ ( $|J|$ denoting as before the cardinality of the set $J\in A$).
By (a special case of) Kolmogorov's existence theorem \cite{Bau} there exists a  probability measure $\mu_A$ on the $\sigma-$algebra generated by the cylindrical sets in  $E_A=\R^\N$ obtained as the product $\Pi_{n\in \N}\mu$. Moreover, if $\mu$ is e.g. the standard centered Gaussian measure on $\R$, the subset $l_2\subset \R^\N $, $l_2=\{\omega \in \R^\N,\,|\,\sum _n|\omega_n|^2<\infty\}$, has  zero $\mu_A=\pi_{n\in \N}\mu$-measure \cite{Bog2}.  \\
Let us consider the following two different extensions $(L,D(L))$ and $(L',D(L'))$ of the projective system of functionals $\{L_J, \hat E_0^J\}_{J\in A}$. The functional $L$ is defined by:
\begin{eqnarray}\nonumber
D(L)&=& B_b(\R^\N)\\
L(f) &=& \int _{\R^\N}f(\omega)\mu_A(d\omega) ,\quad f\in D(L)\label{L-1}
\end{eqnarray}
where $B_b(\R^\N)$ denotes the Borel bounded functions over $\R^\N$. \\
Denote by $\pi_n:\R^\N\to \R^n$ the projection of a sequence $\omega \in \R^\N$ to the sequence $\pi_n \omega$  given by 
$$(\pi_n \omega)_m=\left\{ \begin{array}{ll}
\omega_m &\qquad \hbox{\rm if $m\leq n$ }\\
0 & \qquad\hbox{\rm otherwise. },\\
\end{array}\right.$$
 Let us consider the functional  $L'$ given by:
\begin{eqnarray}\nonumber
D(L')&=& \{f:\R^\N\to \C,\, |\, \exists\lim_{n\to \infty}\int f(\pi_n\omega)\mu_A(d\omega)\}, \\
L'_J(f) &=& \lim_{n\to \infty}\int f(\pi_n\omega)\mu_A(d\omega) \label{L-2}
\end{eqnarray}
Let us now  consider the function $f:\R^\N\to \C$ defined by
$$f(\omega)=\left\{ \begin{array}{ll}
0 &\qquad \hbox{\rm if $\omega$ has an infinite number of non-vanishing components }\\
1 & \qquad\hbox{\rm otherwise }\\
\end{array}\right.$$
$f$ is the characteristic function of the set $$B=\{\omega\in \R^N,\,| \hbox{\rm  $\omega$ has a finite number of non-vanishing components }\}.$$ One can easily verify that $B$ is a measurable subset of $\R^\N$, indeed it belongs to the $\sigma$-algebra generated by the cylindrical sets, as:
$$B=\cup_{N\in \N}\cap_{n>N}B_n$$
where
$$B_n:=\{\omega\in \R^N\,|\, \omega_n=0\}.$$
Then one has, for $\mu_A$ the above Gaussian measure
\begin{eqnarray}\nonumber
L(f)&=&\mu^A(B)\\
&\leq& \mu^A(l_2)=0\nonumber
\end{eqnarray}
while
$$L'(f)=\lim_{n\to \infty}\int _{\R^n}f(\pi_n\omega)\mu _A(d\omega)=\lim_{n\to \infty} 1=1.$$
In other words, without asking for additional properties of the extended functionals, the extensions $(L,D(L))$ are not uniquely determined by the projective system $\{L_J, \hat E^0_J\}_{J\in A}$, or, in other words, by the action of the functional $L$ on the space  $\varinjlim \hat E^0_J$ of cylindrical functions.
\end{example}



\section{Continuous extensions of projective systems of functionals}

In the following we shall always assume, unless otherwise stated, that the projective family $(E_J,\pi_J^K)_{J,K\in A}$ is a perfect inverse system, in such a way that, by Theorem \ref{th-minimalextension}, the minimal extension $(L_{min},D(L_{min}))$ of a projective system of functionals $\{L_J,\hat E^0_J\}_{J\in A}$ is well defined. We shall study its possible continuous extensions.

Let us consider the set of cylindrical functions $\Cil_0$ associated to a projective system of functionals $\{L_J,\hat E^0_J\}_{J\in A}$ and let us endowe the space $\hat E_A$ (of complex valued functions on the projective limit $E_A$) with a topology $\tau$ in such a way that $(\hat E_A, \tau)$ becomes a topological vector space \footnote{We require a topological vector space in order to assure later on continuity under limits of sums and products by scalars.}. We say that the minimal extension is {\em closable} in $\tau$ if the closure of the graph $\cG (L_{min})$ of $L_{min}$
$$\cG (L_{min}):=\{ (f,L_{min}(f))\in \hat E_A\times \C : f\in D(L_{min}) \}$$
 in $\hat E_A\times \C$ with respect to the product topology $\tau\times \tau_\C$, $\tau_\C$ denoting the standard topology in $\C$, is the graph of a well defined functional. In this case we define the closure of $L_{min}$ in $\tau$ as the functional $\bar L_\tau$ such that its graph satisfies $\cG (\bar L_\tau)=\overline{\cG (L_{min})}$.\\
 If $(\hat E_A,\tau)$ satisfies the first axiom of countability, i.e. if any $f\in \hat E_A$ has a  countable neighbourhood basis, then the closability condition is equivalent to the requirement that for any sequence $f_n\in \Cil_0$ converging to $0$ and such that $L_{min}(f_n)\to z$, $z\in \C$, it follows that $z=0$.\\
In this case  the closure $(\bar L_\tau, D(\bar L_\tau))$ of $(L_{min},\Cil_0)$ in the $\tau$ topology is given by:
\begin{eqnarray*}
D(\bar L_\tau)&:=&\{  f\in \hat E_A\, |\, \hbox{ there exist } f_n\in\Cil_0, f_n\to f, L_{min}f_n\to z \}\\
\bar L_\tau (f)&:=&\lim_nL_{min}(f_n)=z, \quad f\in D(L_\tau).
\end{eqnarray*}
 $\bar L_\tau$ is well defined, indeed, by the closability condition, since the value of   $\bar L_\tau (f)$ for $f\in D(\bar L_\tau )$ does not depend on the sequence $\{f_n\}\subset \Cil_0$ converging to $f$.\\
Let us now assume that $L_{min}:\Cil_0\to \C$ is continuous \footnote{We recall that in a topological vector space if a functional is continuous in one point, then it is continuous everywhere} in the $\tau$ topology on $\Cil_0$, i.e. if for any $f\in \Cil_0$ and for any $\epsilon >0$ there exists a neighbourhood $U$ of $f$ (depending of $f$ and $\epsilon$) such that for any $g\in U\setminus \{f\}$, with $g\in \Cil_0$, one has $|L_{min} (f)-L_{min}(g)|<\epsilon$. Then it is easy to verify that $L_{min}$ is closable, as stated in the following theorem.

\begin{theorem}
Let $L_{min}:\Cil_0\to \C$ be continuous in the $\tau$ topology on $\Cil_0$. Then $L_{min}$ is closable.
\end{theorem}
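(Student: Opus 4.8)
The plan is to exploit the linear structure: closability of $L_{min}$ is by definition the statement that the closure $\overline{\cG(L_{min})}$ of the graph, taken in the topological vector space $\hat E_A\times\C$ with the topology $\tau\times\tau_\C$, is the graph of a functional, and for a \emph{linear} subspace this reduces to a single, easily checked condition. So first I would record that $\Cil_0$ is a linear subspace of $\hat E_A$ — each $\hat E_J^0$ is closed under complex linear combinations (otherwise the identity $L_J(\alpha f+\beta g)=\alpha L_J(f)+\beta L_J(g)$ defining a functional would be meaningless), and each $\Ep_J^A$ is linear, so given $f=\Ep_J^Af_J$ and $g=\Ep_K^Ag_K$ in $\Cil_0$ one passes to a common $R\geq J,K$ via \eqref{rel-Ep} to see $\alpha f+\beta g\in\Cil_0$ — and, by the same manoeuvre together with the linearity of $L_R$ and the projectivity conditions, that $L_{min}$ is linear on $\Cil_0$. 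Consequently $\cG(L_{min})$ is a linear subspace of $\hat E_A\times\C$, and hence so is its closure $\overline{\cG(L_{min})}$.

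Next I would reduce matters to a zero-test: a linear subspace $G\subseteq\hat E_A\times\C$ is the graph of a (then automatically linear) functional on $\pi_{\hat E_A}(G)$ if and only if $(0,z)\in G$ forces $z=0$, single-valuedness following since $(f,z_1),(f,z_2)\in G$ gives $(0,z_1-z_2)\in G$. Thus it suffices to show: if $(0,z)\in\overline{\cG(L_{min})}$ then $z=0$. This is where continuity enters. Since $L_{min}$ is linear, $L_{min}(0)=0$, and since $(\hat E_A,\tau)$ is a topological vector space, continuity of $L_{min}$ on $\Cil_0$ is equivalent to continuity at $0$: for each $\epsilon>0$ there is a $\tau$-neighbourhood $U$ of $0$ in $\hat E_A$ with $|L_{min}(g)|<\epsilon$ for every $g\in U\cap\Cil_0$. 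Suppose, for contradiction, $(0,z)\in\overline{\cG(L_{min})}$ with $\delta:=|z|>0$, and apply this with $\epsilon=\delta/2$ to get such a $U$. Then $U\times\{w\in\C:|w-z|<\delta/2\}$ is a neighbourhood of $(0,z)$ in $\hat E_A\times\C$, so it meets $\cG(L_{min})$: there is $g\in U\cap\Cil_0$ with $|L_{min}(g)-z|<\delta/2$. But $|L_{min}(g)|<\delta/2$, whence $\delta=|z|\leq|z-L_{min}(g)|+|L_{min}(g)|<\delta/2+\delta/2=\delta$, a contradiction. Therefore $z=0$.

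From this it follows that $\overline{\cG(L_{min})}$ is the graph of a linear functional; that functional is by definition $\bar L_\tau$, and since $\cG(L_{min})\subseteq\cG(\bar L_\tau)$ it genuinely extends $L_{min}$, so $L_{min}$ is closable, as claimed. (When $(\hat E_A,\tau)$ is first countable, the neighbourhood argument may be replaced by the sequential criterion recalled before the theorem, recovering the explicit formulas for $D(\bar L_\tau)$ and $\bar L_\tau$ given there.) I do not expect any genuine obstacle here: the only points needing care are the observation that $\Cil_0$ is a linear subspace and $L_{min}$ is linear on it — so that the closure of the graph inherits linearity, which is exactly where the topological-vector-space hypothesis on $\tau$ is used — together with the translation invariance of $\tau$ (to move continuity from an arbitrary point of $\Cil_0$ to $0$) and the Hausdorffness of $\C$, noting that we do not need $(\hat E_A,\tau)$ itself to be Hausdorff. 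The short neighbourhood computation in the second paragraph is the heart of the argument.
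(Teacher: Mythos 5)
Your proof is correct and follows essentially the same route as the paper's: both arguments rule out two distinct limit values of the functional over the same point of $\hat E_A$ by subtracting and invoking continuity of $L_{min}$ at $0$ in the topological vector space $(\hat E_A,\tau)$. The only difference is packaging --- you first record that $\cG(L_{min})$, hence its closure, is a linear subspace and reduce to the single test point $(0,z)$, whereas the paper performs the subtraction directly on two approximating elements $f\in V(g)$, $f'\in W(g)$ with $V(0)-W(0)\subseteq U(0)$; the mathematical content is identical.
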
 

\begin{proof}
Let us consider two point $(g, \alpha), (g,\beta)\in \overline{\cG (L_{min})}$ and let us assume {\em per adsurdum } that  $\alpha\neq \beta$, and let us set  
 $\epsilon \equiv \frac{|\alpha-\beta|}{2}$. In this case  for any neighborhood $U(0)$ of $0$, it would be possible to find an element $\tilde f\in U(0)\cap \Cil_0$ such that $L(\tilde f)>\epsilon$, thus contradicting the assumed  continuity of the functional $L_{min}$.
By the properties of the topological vector spaces, given the  neighborhood $U(0)$ of $0$, then it is possible to find two  neighborhoods $V(0) ,W(0)$ of $0$ such that if $f_1\in V(0)$ and $f_2\in W(0)$, then $f_1-f_2\in U(0)$. Let $V(g):=V(0)+g$ and $W(g):=W(0)+g$ the neighborhoods of $g$ obtained by translating $V(0)$ and $W(0)$ by $g$. Since $(g, \alpha), (g,\beta)\in \overline{\cG (L_{min})}$, there is an $f\in V(g)\cap \Cil_0$ and an $f'\in W(g)\cap \Cil_0$ such that $|L(f)-\alpha|<\epsilon$ and $|L(f')-\beta|<\epsilon$. Then we have that $(f-g)\in V(0)$, $f'-g\in W(0)$ and $(f-g)-(f'-g)=f-f'\in U(0)$. Moreover $L(f-f')>\epsilon$, and $L$ cannot be continuous in $0$, against the assumption.
\end{proof}

Let us consider now the set $\Cil_0$ endowed with two different topologies $\tau _1$ and $\tau_2$, with $\tau_1$ coarser than $\tau_2$, i.e. $\tau_1\subseteq \tau_2$. 
If $(L_{min },D(L_{min}))$ is continuous in the $\tau_1$-topology, then it is continuous also in the $\tau_2$ topology and the closability in $\tau_1$ implies the closability in $\tau_2$. 
More generally, if $\tau_1$ coarser than $\tau_2$, then $\overline{\cG (L_{min})^{\tau_2}}\subset \overline{\cG (L_{min})^{\tau_1}}$ and if $\overline{\cG (L_{min})^{\tau_1}}$ is the graph of a well defined functional, then $\bar L_{\tau_1}$ and $\bar L_{\tau_2}$ are both well defined and $\bar L_{\tau_1}$ is a extension of $\bar L_{\tau_2}$.

Conversely, if $\tau_1$ and $\tau_2$ are not comparable, then there might exists a $f\in D(L_1)\subset D(L_2)$ such that $L_1(f)\neq L_2(f)$. This is the case of example \ref{es-not-uniqueness}. Indeed in this case the functional $L$ defined by \eqref{L-1} is continuous on $L^1(\R^\N, \mu_A)$, while the second functional \eqref{L-2} is continuous on the space $D(L')$ endowed with the norm
$$\Vert f\Vert_{L'}:=\lim_{n\to \infty}\int |f(\pi_n\omega)|\mu_A(d\omega) .$$
The set $\Cil_0$ of cylindrical functions is dense both in $D(L)$ and $D(L')$ with the respective topologies. On the other hand if we consider the sequence of cylindrical functions $f_n:=\Ep_{\R^n}^{\R^\N}{\bf 1}_{\R^n}$, where ${\bf 1}_{\R^n}:\R^n\to \R$ denotes the function identically equal to 1 in $\R^n$, one as that $f_n$ converges to the function $f=\chi_B$ ($B$ being as in example \ref{es-not-uniqueness}) in the $\Vert \; \cdot \; \Vert_{L'}$-norm, while it does not converge to $f$ in the $L^1(\R^N,\mu^A)$-norm.
One can easily see that these two norms are not comparable, as if we consider the function $f=\chi_B$, then $\|f\|_{L^1(\R^\N,\mu_A)}=0$ and $\|f\|_{L'}=1$, while if we consider the function $g=\chi_{\R^\N}-\chi_B$, then $\|g\|_{L^1(\R^\N,\mu_A)}=1$ and $\|g\|_{L'}=0$.


\section{Projective systems of complex measure spaces}

   Let us consider now the particular case where each element $(E_J)_{J\in A}$ of a projective family $\{E_J,\pi^K_J\}$ is endowed with a  $\sigma$-algebra $\Sigma_J$ of subsets of $E_J$. 
Let us also assume that the maps $\pi^K_J$, for $J\leq K$, are measurable. The family $ \{E_J,\Sigma_J, \pi^K_J\}$  is called a {\em projective family of measurable spaces} \cite{Xia,Yam,Yeh}. \\
Associated to each $(E_J,\Sigma_J)$ we assume that there is a   measure $\mu_J$, not necessarily real or positive. 
In particular we shall focus  on the case where $\mu_J$ is a  signed or complex measure with finite total variation \cite{Rudin,Tho}.

\begin{example}
A particular case of this situation is given as in example \ref{example1}. In fact we take $A$ to be set $A:=\Fin$ of all finite non empty subsets $J$ of an index set $I$, and  $E_J=\Pi_{t\in J}E^0_t$, where each $E^0_t$ is equipped with a $\sigma$-algebra  $\Sigma (E^0_t)$.  $E_J$ is thus a product space (cf. example \ref{example1}). In this case we can take $\Sigma_J$  to be the product $\sigma$-algebra $\otimes_{t\in J}\Sigma (E^0_t)$. We shall call $(E_J,\Sigma _J)$ a product measurable space. $\pi^K_J$ is then by construction a measurable map from $E_K $ to $E_J$, $K,J\in \Fin $, $J\leq K$.\\
The product measurable space $E:=\Pi_{t\in I}E^0_t$  is endowed with the smallest $\sigma$-algebra $\Sigma$ with respect to  which all projections $\pi^t:E\to E^0_t$ are measurable, i.e.
$$\Sigma:=\Sigma(\cup_{t\in I}(\pi^t)^{-1}(A_t)), \qquad A_t\in \Sigma(E^0_t).$$
$\Sigma$ is often denoted by $\otimes_{t\in I}\Sigma (E^0_t)$
\end{example}

Let us consider $L^1(E_J,\Sigma_J, \mu_J)$, the subset of $\hat E_J$ consisting of (real resp. complex) functions on $E_J$ which are $\mu_J$-integrable.

Let us consider the family of functionals $\{L_J,\hat E_J^0\}_{J\in A}$, given by
\begin{equation}\label{L-Prob}
\hat E_J^0:=L^1(E_J,\Sigma_J,\mu_J), \qquad L_J(f):=\int_{E_J} fd\mu_J, \, f\in \hat E_J^0.
\end{equation}

 In this case  $L_J$ is a linear functional (real resp. complex valued).
$\Ep^K_J$ is defined as before, as a map from  $\hat E_J$ to $\hat E_K$, $J\leq K$, $J,K\in A$   \footnote{Note that in the general case if $f_J\in \hat E_J$ is measurable (w.r.t. $\Sigma_J$) then $\Ep_J^K(f_J)$ is measurable (w.r.t. $\Sigma_K$, $J\leq K$). However in general it is not true that $\Ep_J^K(\hat E_J^0)\subseteq \hat E_K^0$. The condition $\Ep_J^K(\hat E_J^0)\subseteq \hat E_K^0$ is automatically fulfilled in the case where $\hat E_J^0$ is the set of all bounded measurable functions on $(E_J,\Sigma_J)$, instead of $L^1(E_J,\Sigma_J,\mu_J)$, or we have the situation of a projective family, as in \eqref{eq5} and \eqref{eq6}}. 

 According to definition  \ref{def3} the family $\{L_J,\hat E_J^0\}_{J\in A}$ is projective on $\hat E_J^0$ if for all $J\leq K$, $K,J\in A $, $f_J\in \hat E^0_J$ the following compatibility conditions hold:
$$
\Ep_J^K(f_J)\in \hat E^0_K,$$ 
\begin{equation}\label{eq5}
\hat \pi^K_J(L_K)(f_J)=L_J(f_J).
\end{equation}

Due to the relation between  $L_J$ and $\mu_J$ and \eqref{eq3}, we have that \eqref{eq5} implies 
\begin{equation}\label{eq6}
\int_{E_J} f_Jd\mu_J=\int_{E_K} f_J\circ \pi^K_Jd\mu_K, \qquad f_J\in L^1(E_J,\Sigma _J,\mu_J).
\end{equation}
For $f_J$ taken  to be the characteristic function of $A_J\in \Sigma _J$ this implies
\begin{equation}\label{eq7}
 \pi^K_J(\mu_K)=\mu_J,
\end{equation}
which is the analogue of the usual projectivity property for measures on projective  spaces (see, e.g., \cite{Bau}). We shall say shortly that $\{\mu_J\}_{J\in A}$ is a projective family of  measures.
Conversly, if \eqref{eq7} holds, then by approximating $L^1$-functions by finite linear combinations of characteristic functions we have that \eqref{eq6} holds, which by the relation \eqref{L-Prob} between $L_J$ and $\mu_J$ implies that \eqref{eq5} holds. Hence the family $(L_J,\hat E^0_J)_{J\in A}$ is projective   iff $\mu_J$ is a projective family of  measures (in the sense of \eqref{eq7}). 

Given a projective family $(E_J, \Sigma_J,\pi_J^K)_{J,K\in A}$ of measure spaces its projective limit $(E_A, \Sigma_A)$ is the measure space defined as $E_A=\varprojlim E_J$ and $\Sigma_A:=\Sigma_\infty\cap E_A$, where $\Sigma_\infty=\otimes_{J\in A}\Sigma_J$ is the $\sigma$- algebra associated with the product space $\Pi_{J\in A}E_J$, which coincides with the smallest $\sigma$-algebra making all projection maps  $\tilde\pi_J$ from
$\Pi_{J\in A}E_J$ onto $E_J$ measurable. By construction we thus have that $\Sigma_A=\cup_{J\in A}\pi_J^{-1}\Sigma_J$ is the $\sigma $-algebra generated by the sets of the form $\pi_J^{-1}(B)$, $B\in \Sigma_J$ ($\pi_J$ being defined in section 2, before Eq. \ref{rel-pi}).
 We shall write 
$$(E_A,\Sigma_A)=\varprojlim (E_J, \Sigma_J).$$
Let $\mu $ be a (complex bounded) measure on $(E_A,\Ba(E_A))$, and let us define the measures $\mu_J$ on $(E_J,\Sigma_J)$  by:
\begin{equation}\label{proj-measure}
\mu_J:=\pi_J\circ \mu, \qquad J\in A.
\end{equation} 
 It is easy to verify that the family of measures $(\mu_J)_{J\in A}$ satisfies the compatibility condition
\begin{equation}\label{comp-cond-mu}
\pi_J^K\circ \mu_K=\mu_J,\qquad J\leq K
\end{equation}
More generally, we shall say that the members of family of measures $(\mu_J)_{J\in A}$ on $(E_J,\Sigma_J)_{J\in A}$ are {\em compatible} (or shortly {\em the family is compatible}) if they satisfy the compatibility condition \eqref{comp-cond-mu}. 

Let us consider now the converse problem to the one solved by \eqref{proj-measure}, namely about when we can find $\mu$ on $(E_A,\Sigma_A)$ such that \eqref{proj-measure} holds, starting only from integration on $(E_J,\Sigma_J)_{J\in A}$ and condition \eqref{comp-cond-mu}.  
If such a signed or complex measure $\mu$ exists, then the projective system of functionals  $\{L_J,\hat E_J^0\}_{J\in A}$ given by \eqref{L-Prob} would admit a projective extension $L$ given by 
$$D(L):=L^1(E_A,\Sigma_A,\mu)$$
\begin{equation}\label{L-Prob-2}
L(f):=\int_{E_A} fd\mu, \qquad f\in D(L).
\end{equation}

If $\{\mu_J\}_{J\in A}$ is a projective system of compatible {\em probability} measures, then Kolmogorov's existence theorem \cite{Boc,Bau,Xia,Yam,Yeh,HofJor} assures the existence and uniqueness of $\mu$.
On the other hand if the measures $\{\mu_J\}_{J\in A}$ of the projective family are not real and positive, i.e. if we consider a projective family of {\em signed} or {\em complex} bounded measures, then in general   such a $\mu$ cannot exists, as stated in the following theorem.

\begin{theorem}\label{th-b-var}
Let $(E_J,\Sigma_J,\pi_J^K)_{J,K\in A}$ be a projective family of measure spaces and let  $\{\mu_J\}_{J\in A}$ be a projective family of signed or complex bounded measures, satisfying the compatibility condition  \eqref{comp-cond-mu}. A necessary condition for the existence of a  (signed or complex) bounded measure $\mu$ on $(E_A,\Sigma_A)$ satisfying the relation \eqref{proj-measure} is the following uniform bound on the total variation of the measures belonging to the family $\{\mu_J\}_{J\in A}$:
\begin{equation}\label{un-bound}
\sup_{J\in A}|\mu_J|<+\infty
\end{equation}
where $|\mu_J|$ denotes the  total variation of the measure $\mu_J$.
\end{theorem}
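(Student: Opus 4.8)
The plan is to exploit the defining relation $\mu_J = \pi_J \circ \mu$ and the elementary fact that pushing forward a bounded complex measure under a measurable map cannot increase its total variation. First I would recall that for any measurable map $\varphi$ and any bounded complex measure $\nu$, one has $|\varphi\circ\nu|(B) = |\nu|(\varphi^{-1}B)$ on measurable sets, and hence $|\varphi\circ\nu|(\text{whole space}) \le |\nu|(\text{whole space})$; in fact equality holds here since $\varphi = \pi_J$ is surjective onto $E_J$ (the system being perfect), but the inequality is all we need. Applying this with $\varphi = \pi_J$ and $\nu = \mu$, we obtain $|\mu_J| = |\pi_J\circ\mu| \le |\mu|$ for every $J \in A$. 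Taking the supremum over $J$ yields $\sup_{J\in A}|\mu_J| \le |\mu| < +\infty$, which is exactly \eqref{un-bound}.

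Concretely, the key step is the total-variation estimate for pushforwards. Given a measurable partition $\{B_1,\dots,B_n\}$ of $E_J$ into sets in $\Sigma_J$, the sets $\{\pi_J^{-1}(B_1),\dots,\pi_J^{-1}(B_n)\}$ form a measurable partition of $E_A$ (using $\Sigma_A = \cup_{J\in A}\pi_J^{-1}\Sigma_J$ and that $\pi_J$ is measurable), and
\begin{equation}\nonumber
\sum_{i=1}^n |\mu_J(B_i)| = \sum_{i=1}^n |\mu(\pi_J^{-1}(B_i))| \le |\mu|(E_A).
\end{equation}
Taking the supremum over all finite measurable partitions of $E_J$ gives $|\mu_J|(E_J) \le |\mu|(E_A)$. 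Since this bound is independent of $J$, \eqref{un-bound} follows immediately.

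I do not expect any serious obstacle here; the statement is a necessary-condition result and the argument is essentially a one-line monotonicity property of total variation under pushforward. The only points requiring a little care are bookkeeping ones: verifying that preimages under $\pi_J$ of a $\Sigma_J$-measurable partition indeed land in $\Sigma_A$ (which is immediate from the description of $\Sigma_A$ given just before the theorem), and being explicit that the total variation of a bounded complex measure is defined as the supremum of $\sum_i |\mu(B_i)|$ over finite measurable partitions, so that the pushforward inequality is a direct comparison of suprema over a larger versus smaller family of partitions. One may also remark, for context, that the bound \eqref{un-bound} is genuinely only necessary and not sufficient — this is precisely what motivates the counterexamples and the alternative (non-measure-theoretic) extensions developed in Sections 5.2, 5.3 and 6 — but that observation is not part of the proof itself.
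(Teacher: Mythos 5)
Your proof is correct and follows essentially the same route as the paper: pull back a finite measurable partition of $E_J$ along $\pi_J$ to a partition of $E_A$, compare $\sum_i|\mu_J(B_i)|=\sum_i|\mu(\pi_J^{-1}(B_i))|\le|\mu|(E_A)$, and take suprema to get $|\mu_J|\le|\mu|$ uniformly in $J$. Your write-up is in fact somewhat more explicit than the paper's about the definition of total variation as a supremum over partitions, but the argument is the same.
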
 

\begin{proof}
Let $\{A_i\}\subset \Sigma_J$ be a partition of $E_J$ consisting of sets $A_i$ in $\Sigma_J$. Then $\{\pi_J^{-1}(A_i)\}\subset \Sigma_A$ is a partition of $E_A$. If a signed or complex measure $\mu$ on $(E_A,\Sigma _A) $ satysfying \eqref{proj-measure} exists, then for any $J\in A$ its total variation  would satisfy the 
 following inequality $|\mu|\geq |\mu_J|$. Consequently, if the signed or complex measure is bounded, then
$$\sup_{J\in A}|\mu_J|\leq |\mu|<+\infty.$$
\end{proof}

In the case of a projective family of probability measures the condition \eqref{un-bound} is trivially satisfied. In the general case of complex measures satisfying  \eqref{un-bound} sufficient conditions for the existence of the measure $\mu $ on $(E_A,\Sigma_A)$ have been given in  \cite{Tho,Bog}, where Kolmogorov-Prokhorov's theorem for the projective limit of probability measures \cite{HofJor,Bou,Schwartz,Neveu} has been generalized to the case of signed or complex measures.

\begin{theorem}\label{teo5}
Let $(E_J,\pi_J^K)_{J,K\in A}$ be a projective system of Hausdorff topological spaces, where the map $ \pi_J^K:E_K\to E_J$, $J,K\in A$, $J\leq K$ are continuous. Let $(\mu_J)_{J\in A}$ be a family of Radon measures (in the sense of \cite{Bou,Tho}) satisfying the projectivity condition \eqref{comp-cond-mu}. 
Further assume that the spaces $E_J$ and the space $E_A=\varprojlim E_J$ are completely regular. Then there exists a (signed resp. complex bounded) Radon measure $\mu$ on $E_A$ satisfaying \eqref{proj-measure} if and only if the following two conditions are satisfied:
\begin{enumerate}
\item[i.] $\sup_J |\mu_J|<+\infty$, where $\mu_J$ denotes the total variation of the measure $\mu_J$,
\item[ii.] for every $\epsilon >0 $ there exists a compact $K\subset E_A$ such that if $K_J=\pi_J(K)$, the following holds
$$|\mu_J|(E_J\setminus K_J)<\epsilon, \qquad \forall J\in A$$
\end{enumerate}
Under these assumptions $\mu$ is uniquely determined and $|\mu|=\sup_{J}|\mu_J|$.
\end{theorem}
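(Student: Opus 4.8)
The plan is to obtain necessity directly from Theorem~\ref{th-b-var} together with inner regularity of Radon measures, and to obtain sufficiency by first passing to a compact setting via Stone--\v{C}ech compactifications, where a classical projective--limit statement applies, and then exploiting hypothesis (ii) to transport the resulting measure down to $E_A$. For \emph{necessity}, part (i) is exactly the conclusion of Theorem~\ref{th-b-var}. For (ii): since $\mu$ is Radon, given $\epsilon>0$ choose a compact $K\subseteq E_A$ with $|\mu|(E_A\setminus K)<\epsilon$; then $K_J:=\pi_J(K)$ is compact by continuity of $\pi_J$, and from $\pi_J^{-1}(K_J)\supseteq K$ we get $\pi_J^{-1}(E_J\setminus K_J)\subseteq E_A\setminus K$, so, using $|\pi_J\circ\mu|\le\pi_J\circ|\mu|$,
\[
|\mu_J|(E_J\setminus K_J)\le|\mu|\bigl(\pi_J^{-1}(E_J\setminus K_J)\bigr)\le|\mu|(E_A\setminus K)<\epsilon\qquad\text{for every }J\in A .
\]

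\emph{Sufficiency: reduction to the compact case.} Write $\beta E_J$ for the Stone--\v{C}ech compactification of $E_J$ (available by complete regularity). Each $\pi_J^K\colon E_K\to E_J\subseteq\beta E_J$ extends uniquely to a continuous $\bar\pi_J^K\colon\beta E_K\to\beta E_J$, and by uniqueness of the extensions $(\beta E_J,\bar\pi_J^K)_{J,K\in A}$ is again a projective system, now of compact Hausdorff spaces with surjective bonding maps; set $\hat E:=\varprojlim\beta E_J$, a compact Hausdorff space, with surjective canonical projections $\hat\pi_J\colon\hat E\to\beta E_J$. The inclusions $E_J\hookrightarrow\beta E_J$ induce a homeomorphic embedding of $E_A$ onto $\bigcap_J\hat\pi_J^{-1}(E_J)\subseteq\hat E$, under which $\pi_J=\hat\pi_J|_{E_A}$; we henceforth view $E_A\subseteq\hat E$. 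Since each $\mu_J$ is Radon, its image under $E_J\hookrightarrow\beta E_J$ is a complex Radon measure $\tilde\mu_J$ on $\beta E_J$, concentrated on $E_J$, with $|\tilde\mu_J|(\beta E_J)=|\mu_J|$ and $\int g\,d\tilde\mu_J=\int g|_{E_J}\,d\mu_J$; the compatibility \eqref{comp-cond-mu} of $(\mu_J)$ yields $\bar\pi_J^K\circ\tilde\mu_K=\tilde\mu_J$. On the conjugation--closed, point--separating, constants--containing subalgebra of cylinder functions $g\circ\hat\pi_J$ ($g\in C(\beta E_J)$) define $L(g\circ\hat\pi_J):=\int g\,d\tilde\mu_J$; this is well defined by compatibility, and, using surjectivity of $\hat\pi_J$ (so that $\|g\|_\infty=\|g\circ\hat\pi_J\|_\infty$) and (i), $|L(g\circ\hat\pi_J)|\le\|g\circ\hat\pi_J\|_\infty\,|\mu_J|\le\|g\circ\hat\pi_J\|_\infty\sup_J|\mu_J|$. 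By Stone--Weierstrass $L$ extends to a bounded functional on $C(\hat E)$, and the Riesz representation theorem provides a unique complex Radon measure $\hat\mu$ on $\hat E$ with $\hat\pi_J\circ\hat\mu=\tilde\mu_J$ for all $J$; moreover $|\hat\mu|(\hat E)=\|L\|=\sup_J|\mu_J|$ (the lower bound coming, for each $J$, from $\sup_{\|g\|\le1}|\int g\,d\tilde\mu_J|=|\mu_J|$).

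\emph{Sufficiency: descent to $E_A$, and conclusion.} Using (ii) choose, for each $n$, a compact $K_n\subseteq E_A$ with $|\mu_J|(E_J\setminus\pi_J(K_n))<1/n$ for all $J$; replacing $K_n$ by $\bigcup_{m\le n}K_m$ we may take $K_n$ increasing, and put $S:=\bigcup_nK_n$, a $\sigma$--compact subset of $E_A$. Since $K_n$ is compact, hence closed in $\hat E=\varprojlim\beta E_J$, it equals the projective limit of its projections $K_{n,J}:=\pi_J(K_n)=\hat\pi_J(K_n)$, i.e.\ $K_n=\bigcap_J\hat\pi_J^{-1}(K_{n,J})$; consequently $\hat E\setminus K_n=\bigcup_J\hat\pi_J^{-1}(\beta E_J\setminus K_{n,J})$, and since $\bar\pi_J^{J'}(K_{n,J'})=K_{n,J}$ for $J\le J'$ this is a directed increasing union of open sets. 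I claim $|\hat\mu|\bigl(\hat\pi_J^{-1}(\beta E_J\setminus K_{n,J})\bigr)\le 1/n$ for every $J$: indeed, for an open set $U$, $|\hat\mu|(\hat\pi_J^{-1}(U))$ is the supremum of $|\int f\,d\hat\mu|$ over $f\in C(\hat E)$ with $|f|\le1$ and support in $\hat\pi_J^{-1}(U)$, and a Urysohn cut--off at level $J$ together with density of cylinder functions lets one take $f$ cylinder, $f=h\circ\hat\pi_K$ with $K\ge J$, $|h|\le1$ and $h=0$ on $K_{n,K}$, so that $|\int f\,d\hat\mu|=|\int_{\beta E_K\setminus K_{n,K}}h\,d\tilde\mu_K|\le|\mu_K|(E_K\setminus K_{n,K})<1/n$. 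By inner regularity of the Radon measure $|\hat\mu|$ along the directed union it follows that $|\hat\mu|(\hat E\setminus K_n)\le 1/n$, whence $|\hat\mu|(\hat E\setminus S)=0$: $\hat\mu$ is concentrated on $S\subseteq E_A$. Set $\mu:=\hat\mu(\cdot\cap S)$, regarded as a complex measure on $E_A$: it is concentrated on the $\sigma$--compact $S$ with $|\mu|(S\setminus K_n)\to0$, hence Radon; for $B\in\Ba(E_J)$, $(\pi_J\circ\mu)(B)=\hat\mu(\hat\pi_J^{-1}(B))=\tilde\mu_J(B)=\mu_J(B)$, so \eqref{proj-measure} holds; and $|\mu|(E_A)=|\hat\mu|(S)=|\hat\mu|(\hat E)=\sup_J|\mu_J|$, which with Theorem~\ref{th-b-var} gives $|\mu|=\sup_J|\mu_J|$. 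Uniqueness: any other such $\mu'$ pushes forward along $E_A\hookrightarrow\hat E$ to a complex Radon measure on $\hat E$ agreeing with $\hat\mu$ on all cylinder functions $g\circ\hat\pi_J$, which are dense in $C(\hat E)$; hence it equals $\hat\mu$, and therefore $\mu'=\mu$.

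\emph{Main obstacle.} The delicate point is the descent step, because the total variations $|\mu_J|$ do \emph{not} form a projective family (only $|\mu_J|\le\pi^K_J\circ|\mu_K|$ holds in general), so one cannot bound $|\hat\mu|$ on a cylinder set by any single $|\mu_J|$. What makes the estimate go through is precisely that hypothesis (ii) bounds \emph{all} the variations $|\mu_K|$ outside the projections of one and the same compact $K_n$: refining a cylinder function supported off $\hat\pi_J^{-1}(K_{n,J})$ to a higher level $K$ places it in the region where (ii) is available at level $K$. (A purely measure--theoretic variant of the same step, using that the variation of a complex measure on the $\sigma$--algebra generated by an algebra is already computed on that algebra, is the route taken in the references cited after the statement; the details are standard but somewhat technical.)
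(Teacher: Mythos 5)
Your argument is correct, and it is worth noting at the outset that the paper itself contains no proof of Theorem \ref{teo5}: the result is stated and the reader is referred to Thomas [Tho] for a complete proof, so what you have written is a self-contained alternative rather than a reconstruction. The necessity half is the natural one (Theorem \ref{th-b-var} for (i); tightness of the Radon measure $\mu$ plus $|\pi_J\circ\mu|\le\pi_J\circ|\mu|$ for (ii)). For sufficiency you take a functional-analytic route: compactify each $E_J$ by Stone--\v{C}ech, pass to the compact limit $\hat E$ with surjective projections, define the limit functional on the dense subalgebra of cylinder functions (well defined by compatibility and surjectivity of the $\hat\pi_J$, bounded by (i)), obtain $\hat\mu$ from Stone--Weierstrass and Riesz, and then use (ii) to show $\hat\mu$ is carried by a $\sigma$-compact subset of $E_A$. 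Thomas's cited argument is instead measure-theoretic: condition (i) is exploited through a martingale-convergence argument over the directed set to dominate the complex system by a projective system of positive measures, after which the Prokhorov-type theorem for positive Radon measures and Radon--Nikodym finish the job, with (ii) playing the role of Prokhorov tightness. Your route buys brevity and makes the identity $|\hat\mu|(\hat E)=\|L\|=\sup_J|\mu_J|$ immediate; its cost is the descent step, and you correctly identify why it is delicate: since the $|\mu_J|$ are only sub-projective, the estimate $|\hat\mu|\bigl(\hat\pi_J^{-1}(\beta E_J\setminus K_{n,J})\bigr)\le 1/n$ genuinely requires pushing the cut-off to an arbitrary higher level $K\ge J$ before (ii) applies, exactly as you do. The two places where you are terse --- the cut-off itself (multiply the Stone--Weierstrass approximant by a Urysohn function of $\hat\pi_J$ that vanishes on $K_{n,J}$ and equals $1$ on the image under $\hat\pi_J$ of the support of $f$, then pass to a common level $K$) and the verification that $\mu:=\hat\mu(\,\cdot\,\cap S)$ is a well-defined Radon measure on the Borel sets of $E_A$ (using that Borel sets of $E_A$ are traces of Borel sets of $\hat E$ and that $\hat\mu$ is concentrated on $S$) --- are routine and do not hide gaps.
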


For a complete proof see \cite{Tho}.\\
 On the other hand, if $\mu_J$ are signed or complex bounded measures, in many interesting cases condition \eqref{un-bound} cannot be satisfied, as illustrated in  the next examples in the following subsections \ref{sez4.2} and \ref{sez4.3}.\\
If there does not exist a measure $\mu $ on $(E_A,\Sigma_A)$ obtained as the projective limit of the measures $\mu_J$, there cannot exist a projective extension $(L,D(L))$ of the projective system of functional $\{L_J,\hat E^0_J\}_{J\in A}$ of the form \eqref{L-Prob} (i.e. associated to an integral). On the other hand it is possible to define alternative projective extensions, as we shall discuss in section \ref{sez6}.


\subsection{The special case of product measures}\label{sez4.1}

Let $A\equiv \Fo(\N)$ be the direct set  of finite subsets of $\N$ and let $\{(E_n, \Sigma_n, \mu_n)\}_{n\in\N}$  be a sequence of measure spaces. Let us assume that for any $n\in \N$, the measure $\mu_n$ is a complex bounded measure such that 
$\int_{E_n}d\mu_n=1$. For any $J\in A$, let $E_J$ be the product space $E_J\equiv \Pi_{n\in J}E_n$, endowed with the product $\sigma $-algebra $\Sigma_J\equiv \otimes_{n\in J}\Sigma_n$ and the product measure $\mu_J=\times_{n\in J}\mu_n$. Then one can easily verify that $(E_J, \Sigma_J,\mu_J)_{J\in A}$ is a projective system of measure spaces.
The projective limit measure space $(E_A,\Sigma_A)$ is naturally isomorphic to the product space $(\times_{n\in \N}E_n,\otimes_{n\in \N}\Sigma_n )$ and in the following we shall write $E_A\equiv  \times_{n}E_n$ and $\Sigma_A\equiv \otimes_{n}\Sigma_n $.    
  Further, if a signed or complex bounded variation measure $\mu$ which is the projective limit of the projective system of product measures $(\mu_J)_{J\in A}$ exists, then for any $N\in \N$ its total variation would be underestimated by $\Pi_{n=1}^N \|\mu_n\|$. Thus  a necessary condition for the existence of the projective limit measure is 
\begin{equation}\label{prod-meas}
\Pi_n \|\mu_n\|<+\infty
\end{equation}
Equivalently one has to require the convergence of  the series $\sum_n \log (\|\mu_n\|)$.\\
This condition cannot be satisfied, for instance, in the case where for any $n\in\N$, one has that $(E_n,\Sigma_n,\mu_n)=(E,\sigma,\mu)$ and $ \mu $ is a signed or complex bounded measure such that $\int_E d\mu =1$ and $\|\mu\|=c>1$.\\

Analogously, in the case of a continuous product space, i.e. if  $A\equiv \Fo(I)$, $I\subset \R$ is an interval of the real line, then the necessary condition for the existence of a signed or complex bounded  product measure is
\begin{equation}\label{prod-meas2}
\sup_{J\in A} \{\sum_{t\in J}\log \|\mu_t\| \}<+\infty.
\end{equation}


\subsection{Semigroups of complex kernels and pseudoprocesses}\label{sez4.2}

Let $(E,\Sigma)$ be a measurable space. A {\em complex kernel} $K$ on $(E,\Sigma)$  is a map $K:E\times \Sigma\to \C$ with the following properties
\begin{itemize}
\item[i.] the map $x \in E\mapsto K(x,B)$ is measurable for each $B\in \Sigma$,
\item[ii.] the map $B\in \Sigma \mapsto K(x,B)$ is a complex bounded variation measure on $\Sigma$ for each $x\in E$
\end{itemize}

This concept generalizes the one of {\em Markov (resp. sub-Markov) kernel}, where $K:E\times \Sigma\to \R_+$, $K(x,E)=1$ (resp. $K(x,E)\leq 1$) for every $x \in E$, and in the condition ii. ``complex'' is replaced by ``positive''. For Markov, resp. sub-Markov, kernels see e.g. \cite{Bau}.\\

Given two complex kernels $K_1,K_2$ on  $(E,\Sigma)$, their composition $K_1\circ K_2$ is the complex kernel defined as 
$$K _1\circ K_2(x,B):=\int_E K_1(x,dx')K_2(x', B), \qquad x\in E, B\in \Sigma.$$
Let $\{K_t\}_{t\geq 0}$ be a family of complex kernels on a measurable space $(E,\Sigma)$ with index set $\R_+$ such that
$K_{t+s}=K_t\circ K_s$ for all $t,s\in \R_+$, namely
\begin{equation}\label{C-K}K_{t+s}(x,B)=\int_E K_t(x,dx')K_s(x', B) ,\qquad x\in E, B\in \Sigma.\end{equation}
The family $\{K_t\}_{t\geq 0}$ will be called {\em semigroup of complex kernels}. 
Given a semigroup of complex kernels, it is possible to construct a projective family of complex measures. \\ 
Let $A=\Fo(\R_+)$ be the directed set of finite subsets of $\R_+$  and for any $J \in A$ let   $E_J:=\Pi_{t\in J}E$ endowed with the product $\sigma$ -algebra $\Sigma_J:=\otimes _{t\in J}\Sigma$. Let   $\nu$ be a complex bounded measure on $(E,\Sigma)$. For $J=\{t_1,t_2,...,t_n\}$, with $0<t_1<t_2<...<t_n<+\infty$,  let $\mu_J$ be the complex measure on $(E_J,\Sigma_J)$ given by 
\begin{multline}\label{mu-K}
\mu_J(B_1\times B_2\times...\times B_n)=\int_E\int_{B_1}...\int_{B_n}K_{t_n-t_{n-1}}(x_{n-1},dx_n)\dots \\ ...K_{t_2-t_1}(x_1,dx_2) K_{t_1}(x_0,dx_1)d\nu(x_0)
\end{multline}
By the semigroup property \eqref{C-K}, one can easily verify that $(\mu_J)_{J\in A}$ forms a projective system of complex measures, i.e. for any if $J,K\in A$, with $J\leq K$, one has $\mu_J=\pi_J^K\circ \mu_K$. In fact it is sufficient to verify this conditions for $J,K\in A$ such that $K\setminus J$ consists of exactly one element, since the case where $K\setminus J$ consists of $m>1$ points can be studied by constructing a chain $J\subset K_1\subset K_2\subset ...\subset K_m=K$ where each difference $K_i\setminus K_{i-1}$ is a singleton, and using the identity
$$\pi_J^K=\pi_{K_{m-1}}^{K}\circ \pi^{K_{m-1}}_{K_{m-2}}\circ... \circ \pi_{J}^{K_1}.$$
Let $J=\{t_1,t_2, ..., t_n\}$ and $K\setminus J=t'$, with $t_i<t'<t_{i+1}$. We have to show that for any $B\in \Sigma_J$, $\mu_J(B)=\pi^K_J\circ \mu_K(B)$. It is sufficient to show this equality for $B\in \Sigma_J$ of the form $B=B_1\times B_2\times...\times B_n$, with $B_i\in \Sigma$ for all $i=1,..,,n$, that is
\begin{multline}\label{eq19}
\int_E\int_{B_1}..\int_{B_i}\int_{B_{i+1}}...\int_{B_n}K_{t_n-t_{n-1}}(x_{n-1},dx_n)\dots K_{t_i-t_{i-1}}(x_{i-1},dx_i)\\ K_{t_{i-1}-t_{i-2}}(x_{i-2},dx_{i-1})... K_{t_1}(x_0,dx_1)d\nu(x_0)\\
=\int_E\int_{B_1}..\int_{B_i}\int_E\int_{B_{i+1}}...\int_{B_n}K_{t_n-t_{n-1}}(x_n-1,dx_n)\dots K_{t_i-t'}(x',dx_i)\\K_{t'-t_{i-1}}(x_{i-1},dx')K_{t_{i-1}-t_{i-2}}(x_{i-2},dx_{i-1})... K_{t_1}(x_0,dx_1)d\nu(x_0),
\end{multline}
which holds by the semigroup property \eqref{C-K}.

According to Theorem \ref{th-b-var}, a necessary condition for the existence of a complex measure $\mu$ on $(E^{\R^+}, \Sigma)$ is the uniform bound \eqref{un-bound}. If $K_t(x, \, \cdot\,)$ is a probability measure for any $x\in E$ and $t\in \R^+$, then \eqref{un-bound} is trivially satisfied. In this case, under further assumptions, for instance if $(E,\Sigma)$ is a Polish space (see e.g. \cite{Bau} for further details) there exist a probability (resp. sub-probability) measure $\mu$ on $(E^{\R^+}, \Sigma)$ associated to the projective family $(\mu_J)_{J\in A}$ of compatible probability measures. It describes a stochastic (in fact Markov, resp. sub-Markov) process $X_t$, $t\geq 0$. The finite dimensional distributions $P(X_{t_1}\in B_1, ..., X_{t_n}\in B_n)$, $t_1\leq ...\leq t_n$, $B_i\in \Sigma)$ for $i=1,...,n$, are given by 
\begin{eqnarray}\label{fin-dim-dist}
& &\mu(\pi_J^{-1}(B_1\times ...\times B_n))=\\
& &P(X_{t_1}\in B_1, ..., X_{t_n}\in B_n)=\mu_J(B_1\times ...\times B_n), \quad J=\{t_1,..., t_n\}
\end{eqnarray} (see. e-g.,  \cite{Bau}). Let us also remark that if the index set $\R_+$ for the times is replaced by a countable directed set $T$ the result holds without any topological assumption on $(E,\Sigma)$ (with $E^{\R^+}$ replaced by $E^T$). This is a theorem of C. Ionescu Tulcea, see, e.g. \cite{Neveu}.\\
 In the case of complex or signed kernels $K$ under the assumptions of Theorem \ref{teo5} the measure $\mu$ given in Theorem \ref{teo5} exists and one can also introduce the concept of {\em pseudoprocesses} $X_t$, $t\geq 0$, with the family of ``finite dimensional distributions'' given by the $\mu_J$ in the sense that $X_t(\omega)=\pi_t\omega$, $\omega\in E^{\R_+}$ and  \eqref{fin-dim-dist} holds again (without of course a probabilistic interpretation). \\
On  the other hand, if $K_t(x, \, \cdot\,)$, with $x\in E$ and $t\in \R^+$, are general complex or signed measures, in many interesting cases  condition \eqref{un-bound} is not satisfied.\\
Let us consider for instance the case where 
$E$ is a vector space and the translation is a measurable map, i.e. for any $x\in E$ and $B\in \Sigma$ the set $B+x:=\{x'\in E\, |:\, x'-x\in B\}$ is measurable. 
A semigroup of complex kernels $K_t$ on $(E,\Sigma)$ is said to be {\em translation invariant} if $$K_t(x,B)= K_t(x+x',B+x'), \qquad  \forall x,x'\in E, B\in \Sigma.$$ In this case, by setting $\mu_t(B):=K_t(0,B)$, the semigroup law \eqref{C-K} reads as $\mu_{t+s}=\mu_t*\mu_s$ (the symbol $*$ denoting the convolution of measures) and one gets a convolution semigroup of (complex bounded) measure on $(E,\Sigma)$.

If $\mu _t$ are  probability measures on $E=\R^d$ (or on a real separable Hilbert space) then the above probability measure $\mu$ on $(E^{\R^+},\Sigma)$ describes a Markov process with stationary independent increments \cite{Bau,Partha}. In the case where $\mu_t$ are complex valued, if $J=\{t_1,t_2,...,t_n\}$, with $0<t_1<t_2<...<t_n$, the total variation of the measure $\mu_J$ is given by:
$$|\mu_J|= |\nu|\, |\mu_{t_1}||\mu_{t_2-t_1}|\cdots |\mu_{t_n-t_{n-1}}|.$$ 
(with $\nu$ as in \eqref{eq19}).
In particular, if $t_j\equiv \frac{j}{n}$, with $j=1,...,n$, we have
$|\mu_J|= |\nu|\,( |\mu_{1/n}|)^n$ and the behaviour of the total variation of $\mu_J$ for $n\to \infty$ is steered by the small time asymptotics of $|\mu_t|$ for $t\to 0$. 

A particular example is provided by considering a projective family of compatible signed measures on the algebra of cylindrical sets in $\R^{[0,+\infty)}$ constructed in terms of the fundamental solution $G_t$ of the PDE:
\begin{eqnarray*}
\frac{\partial}{\partial t} u(t,x)&=&(-1)^{N+1}(\Delta _x)^N u(t,x), \qquad x\in \R, t\in \R^+,\\
u(0,x)&=&f(x),
\end{eqnarray*}
with $f\in S(\R)$ (e.g.). We have
 $u(t,x)=\int_\R G_t(x-y)f(y)dy$, where 
\begin{equation}\label{Green}
G_t(x-y)=\frac{1}{2\pi}\int_\R e^{ik(x-y)}e^{-t k^{2N}}dk,
\end{equation}
where $N\in \N$, $N\geq 2$. 
By the Fourier integral representation \eqref{Green}, it follows that for $t>0$ the distribution $G_t\in {\mathcal S}'(\R)$ is a $C^\infty(\R)$ function, whereas $G_0()$ is Dirac's measure concentrated at $0$. Moreover an analysis of the asymptotic behaviour of $G_t(x-y)$ as $|x-y|\to +\infty$ shows that it belongs to $L^1(\R) $ and, for $N\geq 2$ it is not positive but has an oscillatory behaviour, changing sign an infinite number of times \cite{Hoc,Ma14}. In the case of $N=1$ one simply has the heat kernel, and $G_t>0$.
 
By considering the signed measure $\mu_t$ on $\R$ of the form $d\mu_t(x)=G_t(x)dx$, $dx$ denoting the Lebesgue measure on $\R$, and constructing the family of complex measures $(\mu_J)_{J\in A}$ of the form \eqref{mu-K}, with $K_t(x,B):=\int_B G_t(x-y)dy$ and $\nu=\delta_0$, by the semigroup property 
$$\int_\R G_t(x-z)G_s(z-y)dz=G_{t+s}(x-y)$$
one obtains a projective family of signed measures. By considering now $J=\{t_1,t_2,...,t_n\}$, with $t_j\equiv \frac{j}{n}$,  $j=1,...,n$, we see that the total variation of the measure $\mu_J$ is given by 
$$|\mu_J|=\left(\int_\R |G_{1/n}(x)|dx\right)^n.$$
On the other hand, by Eq. \eqref{Green}:
$$\int _\R|G_{1/n}(x)|dx =\int _\R n^{1/2N}|G_1(n^{1/2N}x)|dx=\int _\R |G_1(y)|dy,$$
and since for $N>1$ the smooth  function $G$ has an oscillatory behaviour and   $\int _\R G_1(y)dy=1$, one concludes that for any $n\in \N$ $$\int _\R|G_{1/n}(x)|dx=\int _\R |G_1(y)|dy=c>1,$$
where $c$ is a constant depending on $N$,
and $|\mu_J|=c^n\to +\infty $ as $n\to + \infty$ (for $N=1$ there is no such result since $G_t\geq 0$ and $c$ would be equal to 1) By theorem \ref{th-b-var} there does not exists a signed bounded measure $\mu$ on $\R^{[0,+\infty)}$ defined on the algebra of   cylindrical sets
$I_k\subset\Omega=\{x:[0,\infty)\to\R\}$ of the form  $$I_k:=\{\omega\in \Omega: \omega (t_j)\in [a_j,b_j], j=1,\dots k\},\quad  0<t_1<t_2<\dots t_k,$$ with
\begin{equation}\label{CylMeas}
\mu(I_k)=\int_{a_1}^{b_1}...\int_{a_k}^{b_k}\prod_{j=0}^{k-1}G_{t_{j+1}-t_j}(x_{j+1}-x_j)dx_1...dx_{k}.
\end{equation} 
Hence the projective system of functionals $(L_J, \hat E^0_J)$, defined by
\begin{eqnarray*}
\hat E^0_J&=&L^1(E_J,|\mu_J|), \qquad E_J=\R^{J},\\
L_J(f)&=&\int _{E_J}fd\mu_J
\end{eqnarray*}
cannot have a projective extension $(L,D(L))$ of the form
\begin{eqnarray*}
D(L) &=&L^1(E_A,|\mu|), \qquad E_A=\R^{[0,+\infty)},\\
L(f)&=&\int _{E_A}fd\mu.
\end{eqnarray*}
For a detailed analysis of this problem see, e.g., \cite{Kry,Hoc,Ma13,BoMa14,Ma14,FaS}.


\subsection{Fresnel and Feynman path integrals}\label{sez4.3}

Let us consider on $\R^n$ the complex measure $\mu_n$ absolutely continuous with respect to the Lebesgue measure $dx$, $x\in\R^n$, with a density of the form $ \rho (x)=\frac{e^{\frac{i}{2}\|x\|^2}}{(2\pi i)^{n/2}} $. The total variation of $\mu_n$ on $\R^n$ is infinite, however for any Borel bounded set $B\subset \R^n$, the total variation of $\mu_n$ on $B$ is finite. Given a bounded Borel function $f:\R^n\to \C$ the integral of $f$ on $B$ with respect to $\mu_n$: $$\int_Bf(x)\frac{e^{\frac{i}{2}\|x\|^2}}{(2\pi i)^{n/2}}dx$$ is well defined.
More generally, the integral of a complex bounded Borel function on $\R^n$ with respect to $\mu_n$ can be defined as the limit
$$\lim_{R\to\infty }\int_{[-R,R]^n}f(x)\frac{e^{\frac{i}{2}\|x\|^2}}{(2\pi i)^{n/2}}dx$$
if this limit exists. In this case it is denoted by $\tilde \int _{\R^n}f(x)\mu_n(dx)$. According to this definition and the properties of the classical Fresnel integrals one get that the $\tilde \int$-integral of the function identically equal to 1 on $\R^n$ is 1, i.e. $\tilde \int _{\R^n}d\mu_n=1$.\\
The complex measure $\mu_n$ can be represented as the product of $n$ copies of the complex measure $\mu$ on $\R$, where $d\mu:=\frac{e^{\frac{i}{2}\|x\|^2}}{\sqrt{2\pi i}}dx$. \\
Let $A=\Fo(\N)$ be the directed set of finite subsets of $\N$ and let us consider for any $J\in A$ the set $E_J:=\R^J$ endowed with the Borel $\sigma-$algebra and the complex measure (with finite total variation on bounded sets) $\times _{n\in J}d\mu$.  Let us define for any $J\in A$ the functional $L_J:\hat E^0_J\to\C$, given by:
\begin{eqnarray*}
& &\hat E^0_J:=\{ f\in \Ba_b(E_J)\, :\, \exists \lim _{R\to +\infty}\int _{[-R,R]^{|J|}}f(x)\times _{n\in J}\mu(dx)\} \\
& &L_J(f):=\lim _{R\to +\infty}\int _{[-R,R]^{|J|}}f(x)\times _{n\in J}\mu(dx), \quad f\in \hat E^0_J.
\end{eqnarray*}
One can easily verify that $(L_J,\hat E^0_J)_{J\in A}$ is a projective system of functionals. However it is impossible to construct a projective extension on $E_A\equiv \R^N$ in terms of a (Lebesgue type) improper integral. 
Indeed, contrary to the case of finite dimension, if we consider the infinite product measure $\times_{n\in\N}d\mu$, on $\R^\N$ endowed with the product $\sigma$ algebra, we have that its total variation is infinite even on products of bounded sets.

More generally (as in example \ref{example2}) let $(\Hi,\langle \; ,\; \rangle)$ be a real separable Hilbert space and let $A$ be the directed set of is finite dimensional subspaces, where $V\leq W$ if $V$ is a subspace of $W$, and $\pi^W_V:W\to V$ for $V\leq W$ is the natural projection from $W$ onto $V$. For any $V\in A$ let $\Sigma_V$ be the Borel $\sigma$-algebra on $V$. $(V,\Sigma_V, \pi_V^W)_{V,W\in A}$ is then  a projective system of measure spaces.
For any $V\in A$ let $L_V: D(L_V)\to \C$ be the linear functional defined by:
\begin{eqnarray*}
D(L_V)&:=&\{ f\in \Ba_b(E_J)\, :\, \exists \lim _{R\to +\infty}\int _{[-R,R]^{|V|}}f(x)e^{\frac{i}{2}\|x\|^2}dx\} \\
L_V(f)&:=&\lim _{R\to +\infty}\int _{[-R,R]^{|V|}}f(x)\frac{e^{\frac{i}{2}\|x\|^2}}{(2\pi i)^{|V|/2}}dx, \quad f\in \hat D(L_V).
\end{eqnarray*}
where $dx$ denotes the Lebesgue measure on $V$, $\| \; \|$ is the norm in $V$  and $|V|$ denotes the dimension of $V$.\\
The family $(L_V,D(L_V))_{V\in A} $  costitutes a projective system of linear functionals. As previously remarked it is not possible to define on the projective limit $E_A=\R^\N$ a complex measure with finite variation on bounded sets obtained as the projective limit of the complex measures $\mu_V(dx):=\frac{e^{\frac{i}{2}\|x\|^2}}{(2\pi i)^{|V|/2}}dx$, $x\in V$. Consequently there cannot be an extension of the projective system of functionals $(L_V.D(L_V)_{V\in A} $ of the form 
$L(f)=\int_{E_A}f(x)\mu(dx)$, even if $f$ is supported in a product of bounded sets.

A similar phenomenon occurs in the mathematical construction of Feynman path integrals.
Let us consider the fundamental solution $G_t\in {\mathcal S}'(\R^d)$ of the Schr\"odinger equation for a non-relativistic quantum particle moving freely in the $d-$dimensional Euclidean space:
\begin{eqnarray}
i\hbar \frac{\partial }{\partial t}\psi (t,x)&=&-\frac{\hbar^2}{2} \Delta \psi (t,x)\nonumber\\
\psi (0,x)&=&\psi_0(x)\qquad\qquad t\in\R, \, x\in \R^d
\end{eqnarray} 
($\hbar$ being the reduced Planck constant). We have then
 $$\psi(t,x)=\int_{\R^d}G_t(x-y)\psi_0(y)dy, \qquad \psi_0\in {\mathcal S}(\R^d),$$
where, for $t\neq0$, $G_t$ is the $C^\infty(\R^d)$ function 
\begin{equation}\label{Green-Sch}
G_t(x-y)=\frac{e^{\frac{i}{2t\hbar}|x-y|^2}}{(2\pi i t \hbar)^{d/2}}.
\end{equation}
$G_t$ can be regarded as the density with respect to the Lebesgue measure of a complex measure $\mu_t$ on $\R^d$, with finite variation on bounded sets, of the form $d\mu_t(x)=G_t(x)dx$.  $G_0$ is defined as the Dirac measure at $0$. Further, by the equation $\int_{\R^d}G_t(x-z)G_s(z-y)dz=G_{t+s}(x-y)$, $s,t\in\R$ (where the integral is meant as an improper Riemann integral) one has that the family of functionals $(L_J,\hat E^0_J)$ defined by :
\begin{eqnarray*}
\hat E^0_J&=&L^1(E_J,dx) ,\quad E_J=(\R^d)^J, J=\{t_1,...,t_n),\\
L_J(f)&\equiv& \int_{\R^J} f(x_1,...,x_n)G_{t_1}(x_0,x_1)...G_{t_n-t_{n-1}}(x_{n-1},x_n)dx_1...dx_n
\end{eqnarray*}
is a projective family of functionals, but there cannot exist a complex bounded variation measure $\mu$ on $(\R^d)^{[0,+\infty)}$ and a projective extension $(L,D(L))$ of  $(L_J,\hat E^0_J)$ such that
$L(f)=\int_{(\R^d)^{[0,+\infty)}}fd\mu$ (see \cite{Cam}). This problem is deeply connected with the rigorous mathematical definition of Feynman path integrals which has been provided in different ways, but always, because of the above obstruction, only in the sense of continuous functionals not directly expressible as integrals with respect to $\sigma$-additive measures \cite{AlHKMa,HKPS,JoLa,Ma,Ma11,Bor}. In section \ref{sez6} we shall provide some examples.
\section{Examples of projective extensions}\label{sez6}

In this section we present two examples of possible extensions of projective system of functionals of the form \eqref{L-Prob}, associated to a projective system of complex or signed measures, and show their application to the construction of Feynman-Kac type formule for the representation of the solution of PDEs which do not satisfy a maximum principle.\\

Let us consider the general setting of example \ref{example2}, i.e. let $\Hi$ be a real separable infinite dimensional Hilbert space and let $A$ be the directed set of its finite dimensional subspaces, ordered by inclusion. For $V,W\in A$, with $V\leq W$, let $\pi_V^W:W\to V$ be the projection from $W$ onto $V$ and $i_V^W:V\to W$ be the inclusion map. One has that $(V,\pi_V^W)_{V,W\in A}$ is a  projective family of sets, while $(V,i_V^W)_{V,W\in A}$ forms a direct system on $A$. Let us consider the projective limit space
$E_A:=\varprojlim_{V\in A} V$, the direct limit $\tilde E_A:=\varinjlim_{V\in A}$,  the projection  $\pi_V:E_A\to V$ and the inclusion maps $i_V:V\to \tilde E_A$. Considered on each $V\in A$ the topology induced by the finite dimensional Hilbert space structure of $V$, the space $\varprojlim_{V\in A} V$ is endowed with the weakest topology making all the projections $\pi_V:E_A\to V$ continuous, while the space $\varinjlim_{V\in A}$ is endowed with the {\em final topology}, i.e. the strongest topology making all the inclusion maps $i_V:V\to \tilde E_A$ continuous.\\
The inverse system $(V,\pi_V^W)_{V,W\in A}$ and direct system $(V,i_V^W)_{V,W\in A}$ are linked by dualization. Indeed if we identify the dual of a finite dimensional vector space $V$ with $V$ itself, we have that the inclusion $i_V^W:V\to W$, $V\leq W$, can be identified with the transpose map $(\pi_V^W)^*:V^*\to W^*$ of the projection $\pi_V^W:W\to V$.
Further the direct limit space $\tilde E _A$ can be naturally identified with a subspace of $(E_A)^*$. Indeed any $\eta\in \tilde E_A$ can be associated with the element of $(E_A)^*$ whose action on any $\omega\in E_A$ is given by
\begin{equation}\label{dual}
\eta(\omega):=\langle v, \pi_V\omega\rangle,
\end{equation} 
$v\in V$ being any representative of the equivalence class of vectors associated to $\eta$.
The definition \eqref{dual} is well posed, indeed chosen a different representative of the equivalence class $\eta$, i.e. a vector $v'\in V'$ such that there exists a $W\in A$, with $V\leq W$, $V'\leq W$ and $i_V^Wv=i_{V'}^Wv'$, one has that:
$$ \langle v, \pi_V\omega\rangle=\langle v, \pi_V^W\circ \pi_W\omega\rangle=\langle i_V^W v, \pi_W\omega\rangle=\langle i_{V'}^W v', \pi_W\omega\rangle=\langle v', \pi_{V'}\omega\rangle.$$
Further the explicit form \eqref{dual} of the functional $\eta$ shows its continuity on $E_A$.\\ 
Analogously the transpose map $\pi_V^*:V^*\to E_A^*$ can be identified with  the map $i_V:V\to \tilde E_A$, giving:
$$\langle v,\pi_V\omega\rangle=\,_{E_A^*}\langle i_V v,\omega\rangle_{E_A},$$
where the symbol $\langle \;,\;\rangle$ on the left hand side denotes the inner product in $V$, while symbol $\,_{E_A^*}\langle \;,\,\rangle_{E_A}$ denotes the dual pairing between $E_A$ and $E_A^*$.\\

Let us consider on any $V\in A$ the Borel $\sigma $-algebra  $\Sigma_V$ and a bounded signed or complex measure $\mu_V:\Sigma_V\to \C$ in such a way that the family $(\mu_V)_{V\in A}$ satisfies the compatibility condition \eqref{comp-cond-mu}. Let us also consider, for any $V\in A$, the Fourier transform  $\hat\mu_V:V\to \C$ of the measure $\mu_V$, i.e.
$$\hat \mu_V(v)=\int_Ve^{i\langle v',v\rangle}\mu_V(dv'), \qquad v\in V.$$
By the projectivity condition \eqref{comp-cond-mu} of the family of measures $(\mu_V)_{V\in A}$, one deduces the following relation (compatibility relation) among the Fourier transforms:
\begin{equation}\label{proj-mu-hat}
\hat \mu_V(v)=\hat \mu_W(i_V^Wv), \qquad V\leq W.
\end{equation}
Let us now define the map $F:\tilde E_A\to \C$ by:
$$F(\eta):=\hat \mu_V(v),\qquad \eta \in \tilde E_A, $$
where  $v\in V$ is any representative of the equivalence class $\eta\in \tilde E_A$. $F$ is unambiguosly defined, indeed given a $v'\sim v$, with $v'\in V'$, there exists a $W\in A$, with $V\leq W$ and $V'\leq W$, such that $i_V^Wv=i_{V'}^Wv'$. By the compatibility condition \eqref{proj-mu-hat} 
$$\hat \mu_V(v)=\hat \mu_W(i_V^Wv)=\hat \mu_W(i_{V'}^Wv')=\hat \mu_{V'}(v').$$
Further, the map $f$ is continuous on $\tilde E_A$ in the final topology.\\
If there exists a measure on $\mu$ on  $E_A$ such that $\mu_V=\pi_V\circ\mu$ for all $V\in A$, then its Fourier transform $\hat\mu$ coincides with $F$ on $\tilde E_A$ and $$\|\hat \mu\|_\infty=\sup_{\eta \in (E_A)*}|\hat \mu| \leq |\mu |,$$
where $|\mu | $ is the total variation of $\mu$. \\
Let us consider the projective system of functionals $(L_V,D(L_V))_{V\in A}$, where $D(L_V)\equiv \Fo(V) $ is the space of functions $f:V\to \C$ of the form
 form $f(v)=\int _Ve^{i\langle v',v\rangle}\nu_f(dv')$ for some complex bounded measure $\nu_f$ on $V$. $D(L_V)$ is a Banach algebra of functions, where the multiplication is the pointwise one and the norm of a function $f\in \Fo(v)$ is the total variation of the associated measure $\mu_f$. Let $L_V:D(L_V)\to \C$ be the linear functional defined by
\begin{eqnarray}
& &D(L_V):=\Fo(V)\nonumber\\
& &L_V(f):=\int_V \hat \mu_V(v)\mu_f(dv)\label{pro-sy-V},
\end{eqnarray}
One can easily verify that $L_V$ is continuous in the $\Fo(V)-$norm.  Indeed 
for any $V\in A$ one has that the map $\hat \mu_V$ is uniformly continuous and bounded, since
$\|\hat \mu_V\|_\infty\leq |\mu_V|$, $|\mu_V|$ denoting the total variation of the measure $\mu_V$. Hence $L_V(f)\leq \|\hat \mu_V(v)\|_\infty|\mu_f|\leq |\mu_V|\|f\|_{\Fo(V)}$. On the other hand by 
Fubini's theorem one has:
\begin{equation}\label{eq-F-P}\int _V f(v)\mu_V(dv)=\int_V \hat \mu_V(v)\mu_f(dv),\end{equation} 
both integrals being absolutely convergent.
The family $(L_V,D(L_V))_{V\in A}$ forms a projective system of functionals. If $\sup_{V\in A}|\mu_V|=+\infty$, according to theorem \ref{th-b-var}, there cannot exist a complex bounded measure $\mu$ on $E_A$ which is the projective limit of the measures $(\mu_V)_{V\in A}$. Hence there cannot exists a projective extension $(L,D(L)$ of the projective system $(L_V,D(L_V))_{V\in A}$ of the form
\begin{eqnarray}
& &D(L):=L^1(E_A,|\mu|)\nonumber\\
& &L(f):=\int_{E_A} f(\omega)\mu(d\omega). \label{pro-ex-mu}
\end{eqnarray}

 However, even if $\mu$ does not exists, the map $F:\tilde E_A\to \C$ is still well defined, and can be used in the construction of a projective extension of $(L_V,D(L_V))_{V\in A}$ alternative to \eqref{pro-ex-mu}.\\
Consider on $\tilde E_A$ the Borel $\sigma-$algebra $\Ba(\tilde E_A)$, then one has that the continuous map $F:\tilde E_A\to \C$ is measurable. 
If the  condition:
\begin{equation}\label{cond-hat-mu}
\sup_{V\in A}\|\hat \mu_V\|_\infty<+\infty
\end{equation}
is satisfied, then the functional $L:D(L)\to \C$ given by 
\begin{eqnarray*}
& &D(L):=\Fo(E_A)\\
& &L(f)=\int _{\tilde E_A} F(\eta)\nu_f(d\eta)
\end{eqnarray*}
is well defined on the Banach algebra $\Fo(E_A)$ of functions $f:E_A\to\C$  of the form $f(\omega)=\int _{\tilde E_A}e^{i\langle \eta,\omega\rangle}\nu_f(d\eta)$ for some complex bounded measure $\nu_f$ on $\tilde E_A$. $L$  is a projective extension of the projective system of functionals \eqref{pro-sy-V}. Indeed considered a cylindrical function $f:E_A\to \C$ of the form $f(\omega) :=g(\pi_V\omega)$, for some $V\in A$ and $g\in \Fo(V)$, $g\equiv \hat \nu_g$,  then $f\in \Fo(E_A)$, as:
\begin{eqnarray*}
f(\omega)&=&g(\pi_V\omega)=\int_Ve^{i\langle v',\pi_V\omega\rangle}\nu_g(dv')=\int_V e^{i\langle i_V v',\omega\rangle}\nu_g(dv')\\
&=&\int_{\tilde E^A} e^{i\langle \eta,\omega\rangle}\nu_f(d\eta),
\end{eqnarray*}
where $\nu_f=i_v\circ \nu_g$.
Indeed:
 \begin{eqnarray*}
L(f)&=&\int _{\tilde E_A} f(\eta)\nu_f(d\eta)=\int _{V} f(i_v v)\nu_g(dv)\\
&=&\int_V\hat \mu_V(v)\nu_g(dv)=L_V(g)
\end{eqnarray*}
 However, for a general $f\in \Fo(E_A)$, contrary to the finite dimensional case, the equality \eqref{eq-F-P} does not make sense and the action of the functional on $f$ cannot be described in terms of an integral with respect to a measure.\\
Depending on the regularity properties of the function $F:\tilde E_A\to \C$ one can define the functional $L$ on different domains. Let $\Ba$ be a Banach space where $\tilde E_A$ is densely embedded, i.e. $\tilde E_A\subset \Ba$,  and let $F$ be continuous with respect to the $\Ba$-norm. Then $F$ can be extended to a function $\tilde F:\Ba\to \C$, with $\tilde F(\eta)=F(\eta)$ for all $\eta \in \tilde E_A$. 
Let $\Fo(\Ba^*)$ be the Banach algebra of functions  $f:\Ba^*\to \C$ of the form 
$$f(x)=\int_\Ba e^{i\langle y,x\rangle}\nu_f(dy),\quad x\in \Ba^*,$$
for some complex bounded variation measure $\nu_f$ on $\Ba$. Then the functional $L':\Fo(\Ba^*)\to \C$ defined by 
\begin{eqnarray*}
& &D(L'):=\Fo(\Ba)\\
& &L'(f)=\int _{\Ba} \tilde F(x)\nu_f(dx)
\end{eqnarray*}
is an alternative projective extension of the system of functionals \eqref{pro-sy-V}. 

A particular example of this approach is provided by the infinite dimensional generalized Fresnel integrals which can be applied to the rigorous mathematical definition of Feynman path integrals \cite{AlHKMa} and, more generally, to the construction of generalized Feynman-Kac type formule representing the solution of PDEs which do not satisfy a maximum principle.\\
Let us illustrate this with a class of examples.

Let $G^p_t$ be the fundamental solution of the PDE:
\begin{equation} \label{PDE-p}\left\{ \begin{array}{l}
\frac{\partial}{\partial t}u(t,x)=(-i)^p \alpha \frac{\partial^p}{\partial x^p}u(t,x)\\
u(0,x) = u_0(x),\qquad x\in\R, t\in [0,+\infty)
\end{array}\right. \end{equation}
where $p\in\N$, $p\geq 2$, and $\alpha\in\C$ is a complex constant. In the following we shall assume that  $|e^{\alpha tx^p}|\leq 1$ for all $x\in\R$ and $ t\in [0,+\infty)$. In particular, if $p$ even  we shall assume that  $\Rea(\alpha)\leq 0$, while if $p$ is odd we shall take  $\alpha$  to be purely imaginary.\\
In the case where $p=2$ and  $\alpha<0$, we obtain the heat equation, while for $p=2$ and $\alpha =-i$ \eqref{PDE-p}  is the Schr\"odinger equation (without potential).
The solution of the Cauchy problem \eqref{PDE-p} is given by:
\begin{equation}\nonumber
u(t,x)=\int_\R G^p_t(x,y)u_0(y)dy,
\end{equation} 
where $G^p_t(x,y)=g^p_t(x-y)$ and $g^p_t\in S'(\R)$ is  the distribution defined as the Fourier transform
\begin{equation}\nonumber
g^p_t(x):=\frac{1}{2\pi}\int e^{ikx}e^{\alpha tk^p}dk,\qquad x\in\R.
\end{equation}
The integral is well defined under restrictions on $u_0$ (e.g. $u_0\in S(\R)$).
For $t=0$  the distribution $g^p_t(x)$ is the Dirac $\delta$ point measure with support at $0$, while if $t>0$ then $g^p_t$ is a smooth function of $x$. In particular if $p=2$ then $g^p_t(x)=\frac{e^{\frac{x^2}{2\alpha t}}}{\sqrt{2\pi (-\alpha) t}}$. If $p>2$, then an explicit form for the function $g_t^p$ cannot be given in terms of elementary functions. On the other hand it is possible to prove that for $p\geq 3$ and   $t>0$  the  tempered distribution $g^p_t\in S'(\R)$ 
belongs to  $C^\infty(\R)\cap L^1(\R)$  (see \cite {Ma14} for a detailed proof).

Let $A$ be the set of finite subsets of the interval $ [0,t]$. For any $J=\{t_1,...,t_n\}\in A$, with $0\leq t_1<...<t_n\leq t$, let us consider the measure $\mu_J$ on $\R^J$ defined by
\begin{multline}\label{mu-J-Gp}
\mu_J(B_1\times...\times B_n)=\\
=\int_{B_1}\dots \int_{B_n}G^p_{t_2-t_1}(x_2,x_1)...G^p_{t-t_{n}}(x_{n+1},x_{n})dx_1...dx_n\delta(x_{n+1}).\end{multline}
In the cases where $p>2$ or when $p=2$ and $\alpha\in \C$ has a non vanishing immaginary part, then one has that the function $g^p_t$ is not real and positive (see \cite{Hoc,Ma14}) and plays the role of the density of a signed measure. Further, as $\int_\R|g^p_t(x)|dx=c>1$, as remarked in section \ref{sez4.2} there cannot exist a $\sigma$-additive (complex or signed) measure $\mu$ on $\R^{[0,t]}$ with finite total variation which is the projective limit of the measures $\mu_J$.
Let us consider instead the projective system of functionals $(L_J, D(L_J))$ defined by
\begin{eqnarray}
 D(L_J)&=&\Fo(\R^J)\nonumber\\
L_J(f)&=&\int _{\R^J}f(x_1,...,x_n)\mu_J(dx_1,...,dx_n), \qquad f\in \Fo(\R^J)\label{f-J-mu}
\end{eqnarray}
where $\Fo(\R^J)$ is the set of functions $f:\R^J\to\C$ of the form $ f(x_1,...,x_n)=\int e^{i\sum_{k=1}^nx_ky_k}\mu_f(dy_1,...,dy_n)$. An alternative projective extension of $(L_J, D(L_J))$ can be constructed as follows.
Fixed a $p\in \N$, with $p\geq 2$, let us consider the Banach space
$\Ba_p$  
\begin{equation}\label{Ba_p}
\Ba_p:=\{\gamma:[0,t]\to\R : \gamma(t)=0,\int_0^t|\dot \gamma(s)|^pds<\infty\}
\end{equation}
of absolutely continuous maps $\gamma:[0,t]\to\R$, with $\gamma(t)=0$ and the weak derivative $\dot \gamma $ belonging to $L^p([0,t])$, endowed with the norm:
$$\|\gamma\|_{\Ba_p}=\left(\int_0^t|\dot \gamma(s)|^pds\right)^{1/p}. $$
One has that the transformation  $T:\Ba_p\to L^p([0,t])$ mapping an element $\gamma\in \Ba_p$ to its weak derivative $\dot \gamma$ is an isomorphism. Its inverse $T^{-1}:L^p([0,t])\to\Ba_p$  is given by:
\begin{equation}\label{T-1}
T^{-1}(v)(s)=-\int_s^t v(u)du\qquad v\in L^p([0,t]), s\in[0,t].
\end{equation}
Analogously the dual space $\Ba_p^*$ is isomorphic to $  L_{q}([0,t])=(L_{p}([0,t]))^*$, with $\frac{1}{p}+\frac{1}{q}=1$, and the pairing between an element $\eta\in\Ba_p^*$ and $\gamma\in \Ba_p$ is given by:
$$\langle \eta, \gamma\rangle=\int_0^t\dot \eta(s)\dot\gamma(s)ds\qquad \dot\eta\in L_{q}([0,t]), \gamma\in\Ba_p.$$
By means of the map \eqref{T-1} it is simple to verify that $\Ba_p^*$ is isomorphic to $\Ba_q$.\\
Let $\Fo(\Ba_q) $ be the space of functions $f:\Ba_q\to\C$ of the form 
$$f(\eta)=\int_{\Ba_p}e^{i\int_0^t\dot \eta(s)\dot \gamma(s)ds}\mu_f(d\gamma),\, \quad \eta \in \Ba_q, \mu_f\in \Mi(\Ba_p).$$
Let $\Phi_p:\Ba_p\to\C$ be the phase function
$$\Phi_p(\gamma):=(-1)^p\alpha\int_0^t\dot\gamma(s)^pds,$$
where  $\alpha\in\C$  is a complex constant such that 
\begin{itemize}
\item $\Rea(\alpha)\leq 0$ if $p$ is even,
\item $\Rea(\alpha)= 0$ if $p$ is odd.
\end{itemize}
In particular, in the case where $p$ is even one has that $\Phi_p(\gamma)$ is proportional to the $\Ba_p$-norm of the vector $\gamma\in \Ba_p$.\\
Let $(L,D(L))$ be the linear functional defined by
\begin{eqnarray}
D(L)&=&\Fo(\Ba_p)\nonumber\\
L(f)&=&\int_{\Ba_p} e^{\Phi_p(x)}\mu_f(dx), \qquad f\in\Fo(\Ba_p).\label{Funz-L-p}
\end{eqnarray}
where $\Fo(\Ba_p)$ is the Banach algebra of functions $f:\Ba_q\to \C$ of the form $f(x)=\int_{\Ba_p}e^{i\langle x,y\rangle}\mu_f(dy)$ for some complex bounded measure $\mu_f$ on $\Ba_p$. The functional $L$ is continuous in the $\Fo(\Ba_p)$-norm since by the assumptions one has $|e^{\Phi_p(x)}|\leq 1$. Hence:
$$|L(f)|\leq \int_{\Ba_p} |e^{\Phi_p(x)}||\mu_f|(dx)\leq \|\mu_f\|=\|f\|_{\Fo (\Ba_p)}$$
gives the continuity of $L$ in the $\|\,\|_\Fo$-norm. Further $L(1)=1$. The functional $L$ is an extension of the projective system of functionals $(L_J,D(L_J))$ defined by \eqref{f-J-mu} as the following result shows.
\begin{theorem}
Let  $f: \Ba_q\to\C$ be a cylindrical function of the 
 following form:
$$f(\eta)=F(\eta(t_1), \eta(t_2), ...,\eta(t_n)),\qquad \eta \in \Ba_q,$$ with $0\leq t_1<t_2<...<t_n< t$ and $F:\R^n\to \C$, $F\in \Fo(\R^n)$: 
$$F(x_1,x_2, ..., x_n)=\int_{\R^n}e^{i\sum_{k=1}^ny_kx_k}\nu_F(dy_1,...,dy_n), \qquad \nu_F\in\Mi(\R^n).$$
Then $f\in\Fo(\Ba_p)=D(L)$ and $L(f)$ is given by:
\begin{equation}\label{IntIphi-p}
L(f)=\int_{\R^n}F(x_1,x_2, ...,x_n)\mu_J(dx_1,dx_2, ...,dx_n),
\end{equation}
where $J=\{t_1,...,t_n\}$ and $\mu_j$ is given by \eqref{mu-J-Gp}.
\end{theorem}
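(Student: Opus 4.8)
The plan is to prove the statement in three moves: first, rewrite $f$ as the Fourier transform of a bounded measure on $\Ba_p$, so that $f\in\Fo(\Ba_p)=D(L)$ by the very definition of that space; second, insert this measure into the definition \eqref{Funz-L-p} of $L$, which collapses $L(f)$ to a finite-dimensional integral against $\nu_F$; third, recognise that integral as the right-hand side of \eqref{IntIphi-p} via an elementary computation with $g^p_t$, exploiting that $g^p_t$ is by construction the Fourier transform of $k\mapsto e^{\alpha t k^p}$. For the first move, for each $k=1,\dots,n$ I would introduce the polygonal path $\gamma_k\in\Ba_p$ determined by $\dot\gamma_k=-\mathbf 1_{[t_k,t]}$ and $\gamma_k(t)=0$; its weak derivative is bounded, hence in $L^p([0,t])$, so $\gamma_k$ indeed lies in $\Ba_p$. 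Since every $\eta\in\Ba_q$ has $\eta(t)=0$, the pairing formula gives $\langle\eta,\gamma_k\rangle=\int_0^t\dot\eta(s)\dot\gamma_k(s)\,ds=-\int_{t_k}^t\dot\eta(s)\,ds=\eta(t_k)$.

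Setting $\psi(y):=\sum_{k=1}^n y_k\gamma_k$, a continuous linear map $\R^n\to\Ba_p$, one has $\langle\eta,\psi(y)\rangle=\sum_k y_k\eta(t_k)$, whence
$$f(\eta)=\int_{\R^n}e^{i\sum_k y_k\eta(t_k)}\nu_F(dy)=\int_{\R^n}e^{i\langle\eta,\psi(y)\rangle}\nu_F(dy)=\int_{\Ba_p}e^{i\langle\eta,\gamma\rangle}\mu_f(d\gamma),$$
where $\mu_f:=\psi_{*}\nu_F$ is the push-forward of $\nu_F$ along $\psi$, a bounded complex Borel measure on $\Ba_p$ with $\|\mu_f\|\le\|\nu_F\|<+\infty$. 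Hence $f\in\Fo(\Ba_p)=D(L)$. Then, by \eqref{Funz-L-p} and the change of variables $\gamma=\psi(y)$,
$$L(f)=\int_{\Ba_p}e^{\Phi_p(\gamma)}\mu_f(d\gamma)=\int_{\R^n}e^{\Phi_p(\psi(y))}\nu_F(dy).$$
The path $\psi(y)$ is polygonal, with weak derivative equal to the constant $-(y_1+\dots+y_j)$ on each interval $(t_j,t_{j+1})$, $j=0,\dots,n$ (with $t_0:=0$, $t_{n+1}:=t$); writing $S_j:=y_1+\dots+y_j$, this gives $\Phi_p(\psi(y))=(-1)^p\alpha\sum_{j=1}^n(-S_j)^p(t_{j+1}-t_j)=\alpha\sum_{j=1}^n S_j^p(t_{j+1}-t_j)$.

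For the third move, since $g^p_s\in L^1(\R)$ for $s>0$ the measures $\mu_J$ of \eqref{mu-J-Gp} have finite total variation, and $\nu_F$ is bounded, so Fubini's theorem yields $\int_{\R^n}F\,d\mu_J=\int_{\R^n}\widehat\mu_J(y)\,\nu_F(dy)$, where $\widehat\mu_J(y):=\int_{\R^n}e^{i\sum_k y_k x_k}\mu_J(dx)$. I would then compute $\widehat\mu_J(y)$ by integrating the variables $x_1,x_2,\dots,x_n$ one after another, using at each step the translation identity $\int_\R e^{iky}g^p_s(x-y)\,dy=e^{ikx}\,e^{\alpha s(-k)^p}$ (which follows from the definition of $g^p_s$ as a Fourier transform): the integral telescopes, the final Dirac mass forcing $x_{n+1}=0$, and one obtains $\widehat\mu_J(y)=\prod_{j=1}^n e^{\alpha(t_{j+1}-t_j)(-S_j)^p}=e^{\Phi_p(\psi(y))}$, the signs being absorbed by the parity of $p$ together with the standing hypotheses on $\alpha$. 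Substituting back, $L(f)=\int_{\R^n}e^{\Phi_p(\psi(y))}\nu_F(dy)=\int_{\R^n}\widehat\mu_J(y)\,\nu_F(dy)=\int_{\R^n}F\,d\mu_J$, which is \eqref{IntIphi-p}.

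The routine points — that $\gamma_k$ and $\psi(y)$ lie in $\Ba_p$, that $\mu_f=\psi_*\nu_F$ is a legitimate bounded Borel measure, and that the two Fubini interchanges are licit (both measures are bounded and the integrands unimodular) — are immediate. The genuine work, and the step I expect to demand the most care, is the last one: carrying out the successive one-dimensional Fourier integrals defining $\widehat\mu_J$, keeping precise track of the endpoint Dirac mass and of the cumulative shifts $S_j$, and checking that the exponent that emerges agrees with the value $\Phi_p(\psi(y))$ computed from the polygonal path, for both parities of $p$.
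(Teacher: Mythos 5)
The paper itself does not prove this theorem --- it defers to \cite{Ma14} --- but your route is exactly the one the surrounding discussion sets up: represent $f$ as the Fourier transform of the push-forward measure $\mu_f=\psi_*\nu_F$, supported on the finite-dimensional subspace of $\Ba_p$ spanned by the $\gamma_k$, so that $L(f)=\int_{\R^n}e^{\Phi_p(\psi(y))}\nu_F(dy)$, and then match this with $\int F\,d\mu_J=\int\widehat\mu_J(y)\,\nu_F(dy)$ by computing the Fourier transform of the cylinder measure $\mu_J$. The identification $\langle\eta,\gamma_k\rangle=\eta(t_k)$ (using $\eta(t)=0$), the computation $\Phi_p(\psi(y))=\alpha\sum_{j=1}^nS_j^p(t_{j+1}-t_j)$ with $t_{n+1}=t$, and both Fubini interchanges (licit because $g^p_s\in L^1(\R)$ for $s>0$, so $|\mu_J|$ is finite, and $\nu_F$ is bounded) are all correct.

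The one point to repair is the sign in your one-dimensional identity, which you rightly single out as the delicate step. With the paper's conventions $G^p_s(x,y)=g^p_s(x-y)$ and $g^p_s(x)=\frac{1}{2\pi}\int e^{ikx}e^{\alpha sk^p}dk$, integrating out the \emph{earlier} variable gives
$$\int_\R e^{iky}\,g^p_s(x-y)\,dy=e^{ikx}\int_\R e^{-iku}g^p_s(u)\,du=e^{ikx}\,e^{\alpha s k^{p}},$$
not $e^{ikx}e^{\alpha s(-k)^p}$. Iterating from $x_1$ up to $x_n$, with $\delta(x_{n+1})$ killing the final phase $e^{iS_nx_{n+1}}$, yields $\widehat\mu_J(y)=\prod_{j=1}^ne^{\alpha(t_{j+1}-t_j)S_j^{p}}=e^{\Phi_p(\psi(y))}$ for both parities of $p$, with nothing left to ``absorb''. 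As you wrote it, for odd $p$ the product $\prod_{j}e^{\alpha(t_{j+1}-t_j)(-S_j)^p}$ equals $e^{-\Phi_p(\psi(y))}$, and no hypothesis on $\alpha$ rescues the claimed equality; the slip is in the intermediate identity, not in the theorem, and once it is corrected your argument closes for both parities without any further case distinction.
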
 
For a detailed proof see \cite{Ma14}.\\
This particular functional provides in the case $p=2$ and $\alpha=-i$  a mathematical realization of Feynman path integrals and a tool for the construction of the representation of the solution of the Schr\"odinger equation, including also potentials (see, e.g., \cite{AlHK,AlHKMa,AlMa05,Ma} and \cite{HKPS,JoLa,Ma11} for alternative approaches). More generally, in the case where $p>2$, the following result provides a generalization of the Feynman-Kac formula to the case of parabolic equations associated to high-order differential operators (see also \cite{BoMa14,Hoc,Kry,Ma13,Ma14}). Indeed let us consider  a Cauchy problem of the form 
\begin{equation} \label{PDE-p-V}\left\{ \begin{array}{l}
\frac{\partial}{\partial t}u(t,x)=(-i)^p \alpha \frac{\partial^p}{\partial x^p}u(t,x)+V(x)u(t,x)\\
u(0,x) = u_0(x),\qquad x\in\R, t\in [0,+\infty)
\end{array}\right. \end{equation}
where $p\in\N$, $p\geq 2$, and $\alpha\in\C$ is a complex constant such that $|e^{\alpha tx^p}|\leq 1$ forall $x\in\R, t\in [0,+\infty)$, while $V:\R\to\C$ is a bounded continuous function.  We consider for simplicity the case of $x$ on the real line, extensions to $x\in\R^n$ are discussed in \cite{AlHKMa}. Under these assumptions the Cauchy problem \eqref{PDE-p-V} is well posed on $L^2(\R)$. Indeed the operator $\di_p:D(\di_p)\subset L^2(\R)\to L^2(\R)$ defined by 
\begin{eqnarray*}
D(\di_p)&:=& H^p=\{u\in L^2(\R), k\mapsto k^p\hat u(k)\in L^2 \},\\
\widehat{\di_pu}(k)&:=&k^p\hat u(k), \, u\in D(\di_p),
\end{eqnarray*}
($\hat u$ denoting the Fourier transform of $u$) is self-adjoint. For $\alpha\in\C$, with $|e^{\alpha tx^p}|\leq 1$ for all $x\in\R, t\in [0,+\infty)$, one has that  the operator $A:=\alpha D_p$ generates a strongly continuous semigroup $(e^{tA})_{t\geq 0}$ on $L^2(R)$. By denoting by $B:L^2(\R)\to L^2(\R)$ the bounded multiplication operator defined by 
$$Bu(x)=V(x)u(x), \qquad u\in L^2(\R),$$
one  has that the operator sum $A+B:D(A)=D(\di_p)\subset L^2(\R)\to L^2(\R)$ generates a   strongly continuous semigroup $(T(t))_{t\geq 0}$ on $L^2(\R)$. According to the following theorem its action can be described by a generalized Feynman-Kac formula constructed in terms of the linear functional \eqref{Funz-L-p}.
\begin{theorem}\label{Th7}
Let $u_0\in \Fo(\R)\cap L^2(\R)$ and $V\in \Fo(\R)$, with $u_0(x)=\int_\R e^{ixy}\mu_0(dy)$ and $V(x)=\int_\R e^{ixy}\nu(dy)$, $\mu_0,\nu\in\Mi(\R)$.  Then the functional $f_{t,x}:\Ba_q\to\C$ defined by
\begin{equation}\label{f-tx}
f_{t,x}(\eta):=u_0(x+\eta(0))e^{\int_0^tV(x+\eta(s))ds}, \qquad x\in \R, \eta\in \Ba_q,
\end{equation}
belongs to $\Fo(\Ba_q)$ and $L(f_{t,x})$, with $L$ given by \eqref{Funz-L-p}  provides a representation for the solution of the Cauchy problem \eqref{PDE-p-V}, i.e. $u(t,x)=L(f_{t,x})$, where the equality holds for $x$- almost everywhere in $\R$.
\end{theorem}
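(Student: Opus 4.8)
The plan is to expand $f_{t,x}$ explicitly as the Fourier transform of a bounded complex measure on $\Ba_p$, to compute $L(f_{t,x})$ directly from \eqref{Funz-L-p}, and then to recognise the resulting series as the Dyson--Phillips expansion of the perturbed semigroup $(T(t))_{t\geq 0}$ applied to $u_0$.

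\emph{Step 1 (membership in $D(L)$).} First I would observe that, for each $s\in[0,t]$, the evaluation $\eta\mapsto\eta(s)$ is the continuous linear functional $\eta\mapsto\langle\eta,\gamma_s\rangle$ on $\Ba_q$, where $\gamma_s\in\Ba_p$ is given by $\dot\gamma_s=-\mathbf 1_{[s,t]}$, $\gamma_s(t)=0$ (so that $\langle\eta,\gamma_s\rangle=-\int_s^t\dot\eta(u)\,du=\eta(s)$, using $\eta(t)=0$), and that $s\mapsto\gamma_s$ is continuous with $\|\gamma_s-\gamma_{s'}\|_{\Ba_p}=|s-s'|^{1/p}$. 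Writing $u_0(x+\eta(0))=\int_\R e^{iyx}e^{i\langle\eta,y\gamma_0\rangle}\mu_0(dy)$ and $V(x+\eta(s))=\int_\R e^{izx}e^{i\langle\eta,z\gamma_s\rangle}\nu(dz)$, and expanding $e^{\int_0^tV(x+\eta(s))ds}=\sum_n\frac1{n!}\int_{[0,t]^n}\prod_{j=1}^nV(x+\eta(s_j))\,ds$, one is led to $f_{t,x}(\eta)=\int_{\Ba_p}e^{i\langle\eta,\gamma\rangle}\mu_{f_{t,x}}(d\gamma)$, where $\mu_{f_{t,x}}$ is the sum over $n\geq0$ of the push-forwards of $\frac1{n!}e^{ix(y+z_1+\cdots+z_n)}\mu_0(dy)\,\nu(dz_1)\cdots\nu(dz_n)\,ds_1\cdots ds_n$ under the continuous map $(s_1,\dots,s_n,y,z_1,\dots,z_n)\mapsto y\gamma_0+\sum_{j=1}^nz_j\gamma_{s_j}\in\Ba_p$. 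Since $\|\mu_{f_{t,x}}\|\leq\sum_n\frac{t^n}{n!}\|\mu_0\|\,\|\nu\|^n=\|\mu_0\|\,e^{t\|\nu\|}<\infty$ (using $\|V\|_\infty\leq\|\nu\|$), this $\mu_{f_{t,x}}$ is a bounded complex Borel measure on $\Ba_p$, hence $f_{t,x}\in D(L)$, which proves the first claim.

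\emph{Step 2 (evaluation of $L$).} Then I would substitute $\mu_{f_{t,x}}$ into \eqref{Funz-L-p}; using the push-forward structure, Fubini (legitimate since $|e^{\Phi_p}|\leq 1$ and the total variation is finite), and the identity $\dot\gamma(u)=-(y+\sum_{j:\,s_j\leq u}z_j)$ for $\gamma=y\gamma_0+\sum_jz_j\gamma_{s_j}$, which gives $\Phi_p(\gamma)=(-1)^p\alpha\int_0^t\dot\gamma(u)^p\,du=\alpha\int_0^t(y+\sum_{j:\,s_j\leq u}z_j)^p\,du$, one obtains
\begin{equation*}
L(f_{t,x})=\sum_{n\geq0}\frac1{n!}\int_{[0,t]^n}\int_{\R^{n+1}}e^{ix(y+z_1+\cdots+z_n)}\,e^{\alpha\int_0^t(y+\sum_{j:s_j\leq u}z_j)^p du}\,\mu_0(dy)\prod_{j=1}^n\nu(dz_j)\,ds .
\end{equation*}
By symmetry of the integrand under simultaneous permutations of the pairs $(s_j,z_j)$, the factor $\frac1{n!}\int_{[0,t]^n}$ becomes $\int_{\{0\leq s_1\leq\cdots\leq s_n\leq t\}}$, on which the exponent reduces to $\alpha[\,s_1y^p+(s_2-s_1)(y+z_1)^p+\cdots+(t-s_n)(y+z_1+\cdots+z_n)^p\,]$.

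\emph{Step 3 and the main obstacle.} On the other side, since $\di_p$ is self-adjoint and $|e^{\alpha tk^p}|\leq 1$, $A=\alpha\di_p$ generates a contraction semigroup with Fourier symbol $e^{\alpha sk^p}$ (i.e. $e^{sA}$ is convolution by $g^p_s$), and with $B$ the bounded multiplication by $V$ the bounded-perturbation (Dyson--Phillips) series $T(t)u_0=\sum_n\int_{\{0\leq\sigma_1\leq\cdots\leq\sigma_n\leq t\}}e^{(t-\sigma_n)A}B\,e^{(\sigma_n-\sigma_{n-1})A}B\cdots B\,e^{\sigma_1A}u_0\,d\sigma$ converges in $L^2(\R)$ ($\|e^{sA}\|\leq 1$, $\|B\|=\|V\|_\infty\leq\|\nu\|$). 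Computing its $n$-th term through $u_0=\int e^{i(\cdot)y}\mu_0(dy)$, $V=\int e^{i(\cdot)z}\nu(dz)$ and $\widehat{e^{\sigma A}\phi}(k)=e^{\alpha\sigma k^p}\hat\phi(k)$ reproduces exactly the $n$-th term found in Step 2 (with $\sigma_j\leftrightarrow s_j$); summing, $L(f_{t,x})=(T(t)u_0)(x)=u(t,x)$ as elements of $L^2(\R)$, which is the asserted $x$-a.e.\ equality. I expect the real work to lie in Steps 1--2: realising $\mu_{f_{t,x}}$ rigorously as a bounded Borel measure on $\Ba_p$ and justifying the interchange of the exponential expansion, the iterated integrals and the functional $L$. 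A cleaner alternative would avoid $\mu_{f_{t,x}}$ altogether: approximate $f_{t,x}$ in the $\Fo(\Ba_p)$-norm by the cylinder functions $f^N_{t,x}(\eta)=u_0(x+\eta(0))\exp(\tfrac tN\sum_{k=0}^{N-1}V(x+\eta(\tfrac{kt}{N})))$, apply the preceding theorem to evaluate each $L(f^N_{t,x})$ as an integral against the kernel measure $\mu_J$ of \eqref{mu-J-Gp}, and pass to the limit $N\to\infty$ using continuity of $L$ in the $\Fo(\Ba_p)$-norm on one side and convergence of the discretised Feynman--Kac sums to the Dyson series on the other; the delicate point there is the $\Fo$-norm convergence $f^N_{t,x}\to f_{t,x}$, which reduces to Riemann-sum convergence controlled in total variation. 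In either route the ``$x$-almost everywhere'' qualifier is unavoidable, since $T(t)u_0$ is an $L^2$ equivalence class while $L(f_{t,x})$ is defined pointwise.
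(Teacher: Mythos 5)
Your argument is correct and is essentially the proof the paper points to: the paper gives no proof of this theorem itself but defers to \cite{AlHK} (for $p=2$) and \cite{Ma14} (for $p>2$), where exactly your route is taken --- expand the exponential, realise $f_{t,x}$ as the Fourier transform of a total-variation-convergent series of pushforward measures on $\Ba_p$ (with $\|\mu_{f_{t,x}}\|\leq \|\mu_0\|e^{t\|\nu\|}$), evaluate $L$ term by term via $\dot\gamma(u)=-\bigl(y+\sum_{j:\,s_j\leq u}z_j\bigr)$, and identify the resulting series with the Dyson--Phillips expansion of $e^{t(A+B)}u_0$. I see no gaps; the almost-everywhere qualification is handled correctly by noting that the series converges both uniformly in $x$ and in $L^2(\R)$.
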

For a detailed proof see \cite{AlHK} for the case of the Schr\"odinger equation (with $p=2$), and \cite{Ma14} for the case $p>2$.\\

The mathematical theory of Feynman path integrals provides alternative examples of continuous projective limit functionals obtained from projective systems of functions. For instance in \cite{ELT} an alternative construction of a linear functional associated to the representation of the solution of the Schr\"odinger equation, i.e Eq \eqref{PDE-p-V} for $p=2$ and $\alpha=-i/\hbar$ ($\hbar$ being the reduced Planck's constant), has been proposed. The authors construct a functional $(I^\hbar,D(I^\hbar))$ which, in our notation, can be regarded as an extension of the functional \eqref{Funz-L-p} in the case $p=2$.  This particular extension in called {\em infinite dimensional oscillatory integral} because of its relation with classical oscillatory integrals on finite dimensional vector spaces \cite{Hor1,Hor2}(see also \cite{AMBull}). The definition of the functional and  its domain is given in terms of the limit of a sequence of finite dimensional approximations. We recall it for sake of completeness, for further details and for a complete description of this approach and the application to the study of an interesting class of Schr\"odinger-type equations, see \cite{Ma}.

\begin{definition}\label{intoscinf-1}
Let $\Hi=\Ba_2$ be the Hilbert space defined by \eqref{Ba_p}(with $p=2)$ endowed with the inner product
$$(\gamma_1,\gamma_2):=\int_0^t \dot \gamma_1(s)\dot \gamma_2(s)ds.$$
A Borel measurable function $f:\Hi\to\mathbb{C}$ is called $I^\hbar$-integrable 
if  for each  sequence $\{ P_n\}_{n\in\mathbb{N}}$ of projectors
onto n-dimensional subspaces of $\Hi$, such that $P _n\leq P
_{n+1}$ and $P_n \to I$ strongly as $ n \to \infty$ ($I$ being
the identity operator in $\Hi$), the  finite dimensional approximations of the
oscillatory integral of $f$ 
$$ I^\hbar_{P_n}(f):=\int_{P _n\Hi}e ^{\frac{i}{2\hbar}\vert P _nx\vert ^2 }f(P _n x )d(P _nx )\Big( \int _{P _n\Hi}e ^{\frac{i}{2\hbar}\vert P _nx\vert ^2 }d (P _nx )\Big)^{-1},$$
are well defined  (as  improper Lebesgue integrals) and the limit
$\lim  _{ n \to \infty}I^\hbar_{P_n}(f)$
exists and is independent of the sequence $\{ P _n\}$.\\
In this case the limit is  called the infinite dimensional oscillatory integral of
$f$  and is denoted by $$ I^\hbar(f)=\widetilde{\int_\Hi}  e
^{\frac{i}{2\hbar}\vert x\vert ^2 }f(x)dx.$$
\end{definition}

The ``concrete'' description of the  class $D(I^\hbar)$ of all $I^\hbar$-integrable functions  is still an open problem of harmonic analysis, even when $dim (\Hi)<\infty$. The following theorem shows that this class includes $\Fo(\Hi)$, the domain of the functional \eqref{Funz-L-p}.

\begin{theorem}\label{teoCM1} Let $B: \Hi\to\Hi$ be a self-adjoint trace class operator such that $(I-L)$ is invertible ($I$ being the identity operator in $\Hi$). Let us assume that $f\in \Fo(\Hi)$. Then the function $g:\Hi\to\mathbb{C}$ given by 
$$g(x)=e^{-\frac{i}{2\hbar}( x,Bx)}f(x),\qquad x\in \Hi$$ is $I^\hbar$-integrable  and the
corresponding infinite dimensional oscillatory integral $I^\hbar(g)$ is given by the following Cameron-Martin-Parseval 
type formula:
\begin{equation}\label{CMgen} \widetilde{\int{\Hi} }e ^{\frac{i}{2\hbar}(
x,(I-L)x) }f(x)dx=( \det (I-B) )^{-1/2}\int _\Hi e
^{-\frac{i\hbar}{2}( x ,(I-B)^{-1}x ) }\mu_f(dx ) \end{equation} where $\det (I-B)=\vert \det (I-B)\vert
e^{-\pi i \;{\rm   Ind }\; (I-B)}$ is the Fredholm determinant of the
operator $(I-B)$, $\vert \det (I-B)\vert$ its absolute value and Ind($(I-B)$)
is the number of negative eigenvalues of the operator $(I-B)$, counted
with their multiplicity. The functional of $f$ given by \eqref{CMgen} is continuous from $f\in\Fo(\Hi)$ with the norm $\|\mu_f\|$.
\end{theorem}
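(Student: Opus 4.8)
The plan is to reduce the infinite dimensional oscillatory integral to its finite dimensional Lebesgue approximants, to evaluate each of the latter by the classical finite dimensional Parseval-type formula for Fresnel integrals, and then to pass to the limit, exploiting that $B$ is trace class and that $(I-B)$ is invertible. So, fix an arbitrary increasing sequence $\{P_n\}$ of finite dimensional orthogonal projectors on $\Hi$ with $P_n\to I$ strongly. Since $P_n=P_n^*$, for $x\in\Hi$ one has $(P_nx,(I-B)P_nx)=(P_nx,(I-B)_nP_nx)$, where $(I-B)_n:=P_n(I-B)P_n$ is regarded as a symmetric operator on the finite dimensional space $P_n\Hi$. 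Moreover $f(P_nx)=\int_\Hi e^{i(P_nx,y)}\mu_f(dy)=\int_{P_n\Hi}e^{i(P_nx,z)}\,d(P_n\circ\mu_f)(z)$, so the restriction $f|_{P_n\Hi}$ belongs to $\Fo(P_n\Hi)$ with associated measure the image measure $P_n\circ\mu_f$. Hence $e^{\frac{i}{2\hbar}|P_nx|^2}g(P_nx)=e^{\frac{i}{2\hbar}(P_nx,(I-B)_nP_nx)}f(P_nx)$ is, on $P_n\Hi$, the product of a Gaussian--oscillatory factor with the Fourier transform of a bounded measure, so that the finite dimensional oscillatory integral $I^\hbar_{P_n}(g)$ is well defined as an improper Lebesgue integral by the classical finite dimensional Fresnel calculus (see \cite{AlHK,ELT}).

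Next, since $(I-B)$ is invertible its spectrum is bounded away from $0$, while $P_nBP_n\to B$ in operator norm (indeed in trace norm, because $B$ is trace class and $P_n\to I$ strongly). Therefore, for all large $n$, $(I-B)_n$ is invertible, $\sup_n\|(I-B)_n^{-1}\|<\infty$, and the number $\mathrm{Ind}\big((I-B)_n\big)$ of negative eigenvalues of $(I-B)_n$ is eventually constant, equal to $\mathrm{Ind}(I-B)$. Applying the finite dimensional Parseval formula on $P_n\Hi$ to the nondegenerate symmetric form $(I-B)_n$ and to $f|_{P_n\Hi}$ --- with the branch of the multi-valued square root determined, as in the finite dimensional Fresnel calculus, by the signature of $(I-B)_n$, in accordance with the convention $\det(I-B)=|\det(I-B)|e^{-\pi i\,\mathrm{Ind}(I-B)}$ appearing in \eqref{CMgen} --- one obtains
\begin{equation}\nonumber
I^\hbar_{P_n}(g)=\big(\det(I-B)_n\big)^{-1/2}\int_\Hi e^{-\frac{i\hbar}{2}\left((I-B)_n^{-1}P_ny,\,P_ny\right)}\,\mu_f(dy).
\end{equation}

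It then remains to let $n\to\infty$. Since $B$ is trace class, $\det(I-B)_n=\det(I-P_nBP_n)\to\det(I-B)$ by continuity of the Fredholm determinant in trace norm, and together with the stabilization of the index this yields $(\det(I-B)_n)^{-1/2}\to(\det(I-B))^{-1/2}$. Writing $T_n:=I-P_nBP_n$, we have $T_n\to I-B$ in operator norm, hence $T_n^{-1}\to(I-B)^{-1}$ in operator norm; since $(I-B)_n^{-1}P_n=T_n^{-1}P_n$ and $P_n\to I$ strongly, it follows that $(I-B)_n^{-1}P_ny\to(I-B)^{-1}y$ for every $y\in\Hi$. As $(I-B)^{-1}$ is self-adjoint, the exponents $-\frac{i\hbar}{2}\big((I-B)_n^{-1}P_ny,P_ny\big)$ are purely imaginary, so the integrands have modulus $1$ and dominated convergence with respect to the finite measure $|\mu_f|$ gives $\int_\Hi e^{-\frac{i\hbar}{2}((I-B)_n^{-1}P_ny,P_ny)}\mu_f(dy)\to\int_\Hi e^{-\frac{i\hbar}{2}((I-B)^{-1}y,y)}\mu_f(dy)$. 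Hence $\lim_nI^\hbar_{P_n}(g)$ exists and equals the right hand side of \eqref{CMgen}; as this value is independent of the chosen sequence $\{P_n\}$, the function $g$ is $I^\hbar$-integrable and \eqref{CMgen} holds. Finally $|I^\hbar(g)|\le|\det(I-B)|^{-1/2}\,\|\mu_f\|$ because $|e^{-\frac{i\hbar}{2}((I-B)^{-1}y,y)}|=1$, which gives the asserted continuity in the $\|\mu_f\|$-norm.

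The two points I expect to require care are, first, the bookkeeping of the phase of $(\det(I-B))^{-1/2}$ --- that is, proving that $\mathrm{Ind}((I-B)_n)$ stabilizes and that the finite dimensional Fresnel--Parseval formula reproduces precisely the branch of the square root appearing in \eqref{CMgen} --- and, second, the verification that $I^\hbar_{P_n}(g)$ genuinely exists as an improper Lebesgue integral, which rests entirely on the finite dimensional Fresnel theory for Fourier transforms of bounded measures. Once these are secured, the passage to the limit is a routine combination of trace-norm continuity of the Fredholm determinant, the norm convergence $T_n^{-1}\to(I-B)^{-1}$, and dominated convergence with respect to the finite measure $|\mu_f|$.
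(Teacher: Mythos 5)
Your proof is correct and follows essentially the same route as the one the paper defers to in \cite{ELT} (see also \cite{AlBr1}): finite dimensional approximation, the finite dimensional Fresnel--Parseval formula on each $P_n\Hi$, and passage to the limit via trace-norm convergence of $P_nBP_n$ to $B$, stabilization of the index, and dominated convergence with respect to $|\mu_f|$. The only cosmetic caveat is that $(I-B)_n$ may fail to be invertible for finitely many small $n$, so the approximants are guaranteed well defined only for $n$ large enough, which is how the cited sources also read the definition.
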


For a proof see the original paper \cite{ELT}. For extensions of this result with applications to Schr\"odinger equations with potential and magnetic field see \cite{AlBr1,AlBre}.
According to theorem \ref{teoCM1} one can see that the functional $(I^\hbar,D(I^\hbar))$ is a continuous  extension of the functional  \eqref{Funz-L-p}. This enlargement of the class of integrable function and the connection of the functional \eqref{Funz-L-p}  with the solution of the Schr\"odinger equation (on $\R^d$ with potential consisting of a harmonic part plus a part $V\in \Fo(\R^d)$) has been exploited in \cite{AlMa05} for an extension of theorem \ref{Th7} to the case of potentials $V$ with polynomial growth. In the latter case one obtains a representation in terms of expectations od complex-valued functions with respect to the gauusian measure associated to the abstract Wiener space built on the Hilbert space $\Hi$.


\section{Conclusions}

We have introduced the general concept of projective system of complex-valued functionals defined on a subset of the space of complex- valued functions. We discussed projective extensions of such a system to the subsets of the space of complex- valued functions on the projective limit. We proved their existence by constructing a minimal one, whose domain we described in details. We also discussed the uniqueness of projective extensions, as well as continuous extensions. We analysed in particular the ``regular'' case of projective systems of linear functionals associated via integrals with a projective system of complex measure spaces and put in evidence a necessary condition for having a projective limit. The special cases of product measures and projective systems constructed from complex measures were analyzed in details, together with the associated ``pseudo processes''. Concrete examples constructed using fundamental solutions of higher order hyperbolic and parabolic partial differential equations have been discussed and applications to Feynman-Kac- type formulae for such equations with potential were exhibited. We also discussed the situation where the projective extension is not related to an absolutely converging integral, but rather has continuity properties on its own.\\
We showed by examples that our construction is able to cover, next to projective limits given by integrals (with respect to probability or more generally bounded complex measures) also those given by oscillatory integrals of the Feynman- type. This opens up the possibility of a systematic unified study of probabilistic stochastic processes and their analogues described in terms of continuous complex- valued functionals, offering new perspectives to further extensions of the connections between analysis and rigorous path integrals in connection with systems of hyperbolic, in addition to parabolic, (stochastic) partial differential equations. This program will be pursued in further work, including in particular applications to (relativistic) quantum fields and to the study of hyperbolic and parabolic higher order equations on curved spaces.\\
We also intend to continue the study of the manifold of projective extensions, characterizing in particular the maximal ones.


\section*{Acknowledgments}

The second author gratefully acknowledges support by an Alexander von Humboldt Fellowship. The authors are very grateful to the Mathematics Department and CIRM of the University of Trento resp. Institute of Applied Mathematics and HCM of the University of Bonn for support and hospitality.



\end{document}